\def\real{\mathbb{R}}
\newcommand{\subscr}[2]{#1_{\textup{#2}}}
\newcommand\numberthis{\addtocounter{equation}{1}\tag{\theequation}}
\newcommand\oprocendsymbol{\hbox{$\square$}}
\newcommand\oprocend{\relax\ifmmode\else\unskip\hfill\fi\oprocendsymbol}
\def \bs {\boldsymbol}
\newtheorem{theorem}{Theorem}
\newtheorem{corollary}[theorem]{Corollary}
\newtheorem{remark}{Remark}
\title{Multi-agent decision-making \\ dynamics inspired by honeybees
\thanks{This research has been supported in part by NSF grant CMMI-1635056, ONR grant N00014-14-1-0635 and DGAPA-PAPIIT(UNAM) grant IA105816.}}
\author{Rebecca Gray$^\star$, Alessio Franci$^\star$, 
Vaibhav Srivastava,~\IEEEmembership{Member,~IEEE}, Naomi Ehrich Leonard,~\IEEEmembership{Fellow,~IEEE}
\thanks{$^\star$These authors contributed equally to this work.}
\thanks{R. Gray and N. E. Leonard are with the Department of Mechanical and Aerospace Engineering, Princeton University, Princeton, NJ, USA,\tt{ \{rgray, naomi\}@princeton.edu}.}
\thanks{A. Franci is with the Department of Mathematics, Universidad Nacional Aut\'onoma de M\'exico, Ciudad de M\'exico, Mexico,  \tt afranci@ciencias.unam.mx}
\thanks{V. Srivastava is with the Department of Electrical and Computer Engineering, Michigan State University, East Lansing, MI, USA, \tt vaibhav@egr.msu.edu}}
\begin{document}
\maketitle

\begin{abstract}
When choosing between candidate nest sites, a honeybee swarm reliably chooses the most valuable site and even when faced with the choice between near-equal value sites, it makes highly efficient decisions. Value-sensitive decision-making is enabled by a distributed social effort among the honeybees, and it leads to decision-making dynamics of the swarm that are remarkably robust to perturbation and adaptive to change.  To explore and generalize these features to other networks, we design distributed multi-agent network dynamics that exhibit a pitchfork bifurcation, ubiquitous in biological models of decision-making.   Using tools of nonlinear dynamics we show how the designed agent-based dynamics recover the high performing value-sensitive decision-making of the honeybees and rigorously connect investigation of mechanisms of animal group decision-making to systematic, bio-inspired control of multi-agent network systems.  We further present a distributed adaptive bifurcation control law and prove how it enhances the network decision-making performance beyond that observed in swarms.
\end{abstract}

\begin{IEEEkeywords}
Adaptive control; animal behavior; bifurcation; decision-making; decentralized control; multi-agent systems; networked control systems; nonlinear dynamical systems. 
\end{IEEEkeywords}

\section{Introduction}
For many applications of multi-agent systems, ranging from  transportation networks and mobile sensing networks to power networks and synthetic biological networks, successful network-level decision-making among alternatives is fundamental to tasks that require coordination among agents.  Enabling a group of individual agents to make a single choice among alternatives allows the group to collectively decide, for example, which alternative is true, which action to take, which direction or motion pattern to follow, or when  something in the environment or in the state of the system has changed.

Since animals do so well at making collective decisions between alternatives, we look to animal groups for inspiration.   For example, a honeybee swarm  makes a single, accurate  choice of the best quality nest site among scouted-out sites \cite{SeeleyBestofN}.   A fish school  makes a single choice among potential food sources about which some individuals may have prior information or preference \cite{Couzin2011}.   Migratory birds  choose together when to depart a rest stop and continue on their route \cite{Eikenaar2014}.   
Indeed, from bird flocks to fish schools, animal groups are known to manage tasks, such as migration, foraging, and evasion, with speed, accuracy, robustness and adaptability~\cite{JKP-LEK:99}, even though individuals employ distributed strategies with limitations on sensing, communication, and computation \cite{JK-GDR:02,SumpterBook}.   

As in the case of  animal groups, successful operation of engineered networks in complex environments requires robustness to disturbance and adaptation in the face of changes in the environment.  Further, like individual animals,  agents in these kinds of networks typically use distributed control and have limitations on sensing, communication, and computation.

Mechanisms used to study collective animal behavior depend on the animals' social interactions and on their perceptions of their environment. A rigorous understanding of these dependencies will make possible the translation of the mechanisms into a systematic, bio-inspired design methodology for use in engineered networks. This remains a challenge, however, in part because most studies of collective animal behavior are empirically based or rely on mean-field models.

To address this challenge, we present a generalizable agent-based, dynamic model of distributed decision-making between two alternatives. In this type of decision-making, the pitchfork bifurcation is ubiquitous \cite{Leonard2014}; for example, it appears  in the dynamics of honeybees choosing a nest site and schooling fish selecting a food source. Our approach is to derive the agent-based model so that it too exhibits the pitchfork bifurcation.   This allows  animal group dynamics and   multi-agent  dynamics  to be  connected mathematically by mapping to the normal form of the pitchfork bifurcation.   The agent-based model provides an important complement to mean field or simulation models by allowing for rigorous analysis of the influence on system performance of distributed information, network topology, and other heterogeneities across the network.

Development of our framework has been largely influenced by the singularity theory approach to bifurcation problems \cite{Golubitsky1985}. The distinction among state, bifurcation, and the so-called unfolding parameters in that theory reflects the hierarchy among controlled variables, control variables, and model parameters in control theory. This analogy naturally translates bifurcation theory into control theory and guides the design of intrinsically nonlinear and tunable behaviors. This connection was originally made in \cite{AF-RS:14} to design neuronal behaviors; see also \cite{AF_FC_2016} for a neuromorphic engineering application.  

In the present paper we specialize the Hopfield network dynamics~\cite{JJH:84} by restricting them to a class of systems suitable for distributed cooperative decision-making, and we rigorously design and analyze distributed dynamics that achieve bio-inspired collective decision-making in an engineered network. Towards this end, we employ techniques from singularity theory, Lyapunov theory, Lyapuniv-Schmidt reduction, and geometric singular perturbation theory.

The generalizability of our framework provides a systematic way to translate a wide range of animal group dynamics into distributed multi-agent feedback dynamics; in this paper we focus the networked agent-based model dynamics on recovering the high performing, value-sensitive decision-making of a honeybee swarm choosing between two candidate nest sites as has been studied empirically and with a mean-field model \cite{Seeley12,Pais2013}; see \cite{reina2017model} for an extension to multiple alternatives and~\cite{AR-etal:15} for an agent-based approach.  Remarkably, honeybees efficiently and reliably select the highest value nest site, and for alternatives of equal value, they quickly make an arbitrary choice when the  value is sufficiently high. 

Our framework also provides the means to study and design decision-making dynamics beyond that observed in biological networks.  To enhance control of  network decision-making dynamics, we introduce an adaptive control law for a bifurcation parameter in the model.  We  design the proposed adaptive control law using a time-scale separation and leverage singular perturbation theory to prove that the adaptively controlled dynamics achieve a unanimous decision in the network.
See related work on control of bifurcations \cite{Abed1986,Chen2000,Krener2004}.

The major contributions of our work are fourfold.  First, we introduce a generalizable agent-based, dynamic  model for bio-inspired decision-making for a network of distributed agents. Second, we characterize and rigorously establish its rich bifurcation behavior for generic strongly connected networks. Robustness and adaptability of decision-making in the proposed model are discussed in terms of the uncovered bifurcation structure and singularity theory concepts. Third, for a class of symmetric networks, we use model reduction and asymptotic expansion to show further in depth how the model captures the adaptive and robust features of value-sensitive honeybee decision-making dynamics and how sensitivity to scale and heterogeneity can be explored.  Fourth, we design distributed adaptive feedback control dynamics for a bifurcation parameter that ensures a unanimous decision in the network. We characterize and rigorously establish the rich nonlinear phenomena exhibited by the adaptive dynamics.

Section \ref{sec:HoneyBees} describes the motivating example of honeybee decision-making dynamics, highlighting  value-sensitive decision-making, adaptability and robustness. In Section \ref{sec:PresentModel}, we present the agent-based decision-making model and analyze the associated nonlinear phenomena in a generic strongly connected network. Model reduction to a low-dimensional, attracting manifold in a class of symmetric networks is established in Section \ref{sec:ModelRed} and used to prove an analytical approximation to the pitchfork bifurcation point.  These analytic results are used in Section \ref{sec:Results} to show value-sensitivity, performance, and  influence of model parameters in the agent-based model.    In Section \ref{sec:feedback} we present the distributed adaptive control law for the bifurcation parameter and analyze the nonlinear phenomena in the resulting controlled adaptive dynamics. We conclude in Section \ref{sec:final}.

\section{Biological Inspiration: Value-sensitive decision-making in honeybees}
\label{sec:HoneyBees}

When a honeybee hive becomes overcrowded, a cast swarm goes out to find a new nest site that will provide sufficiently valuable storage and shelter for surviving the next winter. Honeybees in the swarm identify candidate nest sites, typically cavities in trees, and then collectively choose one among the alternatives. Swarms reliably choose the most valuable site among the alternative nest sites \cite{SeeleyBestofN}, and even when faced with the choice between near-equal value sites, they make highly efficient decisions \cite{Seeley12}. According to the model in \cite{Seeley12}, their decisions are sensitive to both the relative and average value of candidate nest sites \cite{Pais2013}.  This {\em value-sensitive decision-making} means that honeybee swarms can adapt their decisions to the value of available sites.

Collective decision-making in honeybees has been extensively studied, providing insight about not just the characteristic outcomes, but also the contributing mechanisms. From experimental work, it is known that a small population of workers, called scouts, find and assess the value of candidate nest sites using criteria that include site volume, size of entrance, and height above the ground. Each scout advertises for its site at the swarm, using a ``waggle dance" to communicate the site's location and its assessment of the site's value. The scouts also use a cross-inhibitory stop-signal to stop the dancing of the scouts recruiting for a competing site \cite{Seeley12}.

A model of the mean-field population-level dynamics of the swarm was derived in \cite{Seeley12} under the assumption that the total honeybee population size is very large. Analysis of the model was carried out in \cite{Pais2013} to rigorously explain the mechanisms that lead to value-sensivity of the decision-making dynamics. The model considers decision-making between two alternatives and the dynamics of the fractions of the total population that are committed to each of the two sites. In the case of alternatives with equal value, the dynamics exhibit a supercritical pitchfork bifurcation in which the collective decision emerges as the stop-signal rate increases. For stop-signal rate less than a critical value, the only stable solution is the deadlock state, which corresponds to equality of the two committed fractions of the population and therefore no decision.  For stop-signal rate greater than the critical value, deadlock is unstable and there are two stable solutions, corresponding to dominance of agents committed to one of the alternatives. For large enough stop-signal rate, the number of dominant agents will be greater than a quorum threshold, and the two stable solutions correspond to a decision for one of the alternatives.

Further, the critical value of the stop-signal rate is inversely proportional to the average value of the alternatives.  This means that the higher the value of the alternatives, the lower is the stop-signal rate required to break deadlock and get an arbitrary decision for one of the alternatives. If the alternatives have low value and the stop-signal rate is too low for a unanimous decision, the honeybees may be  waiting for a better alternative.  If the wait yields no new alternative, the honeybees could then ramp up the stop-signal rate until it crosses the critical value for a decision \cite{Pais2013}.  This would provide a way of adapting to their circumstances and getting a timely decision even if it less than ideal.

When asymmetry is introduced, the pitchfork unfolds to a persistent bifurcation diagram.  The unfolded bifurcation diagrams are robust in the sense that the decision behavior away from the bifurcation point resembles the symmetric case and also in the sense that the behavior close to the bifurcation point is fully determined by singularity analysis. The way in which the pitchfork unfolds close to the bifurcation point is exactly what determines the sensitivity of the decision-making to asymmetric internal and environmental parameter variations, providing a further mechanism for adaptability that complements value-sensitivity. The unfolded pitchfork bifurcation diagrams also exhibit a hysteresis that makes the group decisions robust to small fluctuations in the relative value of the alternatives.

The characteristics of \emph{value-sensitivity}, \emph{robustness} and \emph{adaptability} are highly desirable in a decision-making process, and \cite{Seeley12} and \cite{Pais2013} demonstrate how these characteristics can arise in decision-making dynamics when they are organized by a pitchfork bifurcation. However, the model in \cite{Seeley12} and \cite{Pais2013} builds on an assumption of a well-mixed (mean-field) population, and so it cannot directly be used to design distributed control strategies or to examine the influence of network topology or distribution of information across the group. This motivates the design of distributed agent-based dynamics that exhibit a pitchfork bifurcation and inherit the advantageous features of the honeybee decision-making dynamics.

\section{An agent-based decision-making model organized by a pitchfork singularity}\label{sec:PresentModel}
We  propose an agent-based decision-making model that specializes the Hopfield network dynamics (\cite{JJH:82}, \cite{JJH:84}).  The model provides generalizable network decision-making dynamics for a set of $N$ agents, and by design it exhibits a pitchfork bifurcation tangent to the consensus manifold. To describe decision-making between two alternatives A and B, let $x_i \in \mathbb{R}$, $i \in \{1, ..., N\}$, be the state of agent $i$, representing its opinion. Agent $i$ is said to favor alternative A  (resp. B) if $x_i > 0$ (resp. $x_i<0$), with the strength of agent $i$'s opinion  given by $|x_i|$. If $x_i = 0$, agent $i$ is {undecided}. The collective opinion of the group at time $t$ is defined by the average opinion $y(t) = \frac{1}{N} \sum_{i=1}^N x_i(t)$. Let $\subscr{y}{ss}$ and $\subscr{\bs x}{ss}$ be steady-state values of $y(t)$ and ${\bs x}(t) =(x_1, \ldots, x_N)^T$, respectively. As proved in Theorem~\ref{theorem:unfolding generic} below, the existence of $\subscr{y}{ss}$ and $\subscr{\bs x}{ss}$ is ensured by the boundedness of trajectories and the monotonicity of the proposed model.

Let the group's \emph{disagreement} $\delta$ be defined by $\delta = | \subscr{y}{ss}| - \frac{1}{N} \| \subscr{\bs x}{ss} \|_1$, where $\| \cdot \|_1$ is the vector $1$-norm. If each entry of $\subscr{\bs x}{ss}$ has the same sign, then there is no disagreement, i.e, $\delta =0$. We say that the group's decision-making is in \emph{deadlock} if either $\bs x_{ss}=\bs 0$ (no decision) or $\delta \ne 0$ (disagreement). A collective decision is made in favor of alternative $\rm A$ (resp. $\rm B$) if $\delta=0$ and $\subscr{y}{ss} > \eta$ (resp. $\subscr{y}{ss} < -\eta$), for some appropriately chosen threshold $\eta \in \real_{>0}$. 

The network interconnections define which agents can measure the state, that is, the opinion, of which other agents, and this is encoded using a network adjacency matrix $A \in \mathbb{R}^{N \times N}$. Each $a_{ij} \geq 0$ for $i,j \in \{1, ..., N\}$ and $i \neq j$ gives the weight that agent $i$ puts on its measurement of agent $j$. Then $a_{ij} > 0$ implies that $j$ is a neighbor of $i$ and we draw a directed edge from $i$ to $j$ in the associated graph. We let $a_{ii} = 0$ for all $i$ and $D \in \mathbb{R}^{N \times N}$ be a diagonal matrix with diagonal entries $d_i = \sum_{j = 1}^N a_{ij}$. $L = D-A$ is the Laplacian matrix of the network graph. In our illustrations, we use $a_{ij}\in \{0,1\}$.

We model the rate of change in state of each agent over time as a function of the agent's current state, the state of its neighbors, and a possible external stimulus $\nu_i$:
\begin{equation} \label{eq:DynScalar}
\frac{dx_i}{dt} = - u_I d_i x_i + \sum_{j = 1}^{N} u_S a_{ij} S(x_j) + \nu_i.
\end{equation}
$\nu_i \in \mathbb{R}$ encodes external information about an alternative received by agent $i$, or it represents the agent's preference between alternatives (we will use ``information" and ``preference" interchangeably). We let $\nu_i \in \{\nu_\text{A}, 0, -\nu_\text{B}\}$, $\nu_\text{A}, \nu_\text{B} \in \mathbb{R}^+$.   If $\nu_i = \nu_\text{A}$ (resp. $\nu_i = -\nu_\text{B}$) agent $i$ is informed about, or prefers, alternative A (resp. B).  If $\nu_i = 0$ agent $i$ receives no information or has no preference. $u_I > 0$ and  $u_S > 0$ are  control parameters and $S: \mathbb{R} \to \mathbb{R}$ is a smooth, odd sigmoidal function that satisfies the following conditions: $S'(z) > 0, \ \forall z \in \mathbb{R}$ (monotone); $S(z)$ belongs to sector $(0,1]$; and sgn$(S''(z)) = -$sgn$(z)$, where $(\cdot)'$ denotes the derivative with respect to the argument of the function, and sgn$(\cdot)$ is the signum function. In our illustrations, we use $S(\cdot) = \tanh(\cdot)$.

Control $u_I$ can be interpreted as the {\it inertia} that prevents agents from rapidly developing a strong opinion. The term $u_S S(x_j)$ can be interpreted as the opinion of agent $j$ as perceived by agent $i$. $S(x)$ is a saturating function, so opinions of small magnitude are perceived as they are, while opinions of large magnitude are perceived as saturating at some cap. Control $u_S$ represents the strength of the \emph{social effort}: a larger $u_S$ means more attention is paid to other agents' opinions.

Let $\boldsymbol{\nu} = (\nu_1, \ldots, \nu_N)^T$, and $\bs S(\bs x)= (S(x_1), \ldots, S(x_N))^T$.  Then \eqref{eq:DynScalar} can be written in vector form as
\begin{equation} \label{eq:DynVectoru1u2}
\frac{d{\bs x}} {dt}= -u_I D\boldsymbol{x} + u_S A\boldsymbol{S}(\bs{x}) + \bs{\nu}.
\end{equation}  
To simplify notation, we  study~\eqref{eq:DynVectoru1u2} using a timescale change $s = u_I t$.  We  denote ${\bs x}(s)$ by ${\bs x}$ and $d{\bs x}/ds$ by $\dot {\bs x}$.  Let $u = u_S/u_I$, $\beta_i = \nu_i /u_I$, $\beta_\text{A} = \nu_{\rm A}/u_I, \; \beta_\text{B} = \nu_{\rm B}/u_I$ and $\boldsymbol{\beta} = (\beta_1, \ldots, \beta_N)^T$. Then each $\beta_i \in \{\beta_\text{A}, 0, -\beta_\text{B}\}$ and~\eqref{eq:DynVectoru1u2} is equivalent to
\begin{equation} \label{eq:DynVector}
\boldsymbol{\dot{x}} = -D\boldsymbol{x} + uA\boldsymbol{S}(\bs{x}) + \bs{\beta}.
\end{equation}

Dynamics \eqref{eq:DynVector} are designed to exhibit a symmetric pitchfork bifurcation in the uninformed case $\bs{\beta}=\bs 0$, with the additional requirement that the two stable steady-state branches emerging at the pitchfork do so along the consensus manifold. In other words, we have designed dynamics \eqref{eq:DynVector}, equivalently dynamics \eqref{eq:DynVectoru1u2}, as a model of {\it unanimous} collective decision-making between two alternatives. To provide intuition about why these dynamics exhibit a pitchfork bifurcation along the consensus manifold, let the network graph be fixed and strongly connected.  Then rank($L$) = $N-1$ and $L\bs{1}_N = \bs{0}$, where $\bs{1}_N$ is the vector of $N$ ones. Observe that the linearization of \eqref{eq:DynVector} at $\bs{x} = \bs{0}$ for $u = 1$ and $\bs{\beta} = \bs{0}$ is the linear consensus dynamics $\bs{\dot{x}} = -L \bs{x}$. Because $L$ has a single zero eigenvalue with the consensus manifold as the corresponding eigenspace, it follows from the center manifold theorem \cite[Theorem 3.2.1]{Guckenheimer2002} that \eqref{eq:DynVector} has a one-dimensional invariant manifold tangent to the consensus manifold at the origin. On this manifold, the reduced one-dimensional dynamics undergo a bifurcation, which, by odd (that is, $Z_2$) symmetry of \eqref{eq:DynVector} with $\bs{\beta}=\bs{0}$, will generically be a pitchfork \cite[Theorem VI.5.1, case~(1)]{Golubitsky1985}.

This argument is proved rigorously in Theorem~\ref{theorem:unfolding generic} for an arbitrary, strongly connected graph and the more general $\bs{\beta}\neq\bs{0}$. We specialize to the all-to-all case in Corollary~\ref{theorem:all-to-all-unif}. Geometric illustrations are provided in Figures~\ref{FIG: pitch on consensus} and ~\ref{fig:univ-unfolding}. Concepts from singularity theory used in the proof of Theorem~\ref{theorem:unfolding generic} are defined in the monograph \cite{Golubitsky1985}, where the first chapter provides a thorough, accessible introduction to the theory, in particular in relation to control problems.  We cite specific references as needed throughout the proof of Theorem~\ref{theorem:unfolding generic}. We also provide insights into the meaning of the singularity concepts after the theorem statement and before its proof.

\subsection{Pitchfork bifurcation by design in generic network}

A preliminary version of the following theorem can be found in the preprint \cite{Franci2015}. Let $g(y, u, \bs \beta)$ be the Lyapunov-Schmidt reduction\footnote{The Lyapunov-Schmidt reduction is the projection of the vector field near its singular point along the null space of the Jacobian at the singular point. The reduction exploits the implicit function theorem to compute a local approximation of the vector field in the subspace orthogonal to the null space. The reduced dynamics can be used to infer associated bifurcations. See~\cite{Golubitsky1985} for a detailed discussion.} of~\eqref{eq:DynVector} at $(y, u, \bs \beta)=(0,1 , \bs 0)$.
\begin{theorem}\label{theorem:unfolding generic}
The following hold for the dynamics~\eqref{eq:DynVector} where the graph is fixed and strongly connected:
\begin{enumerate}
\item For $\bs \beta =\bs 0$, $\bs x = \bs 0$ is globally asymptotically stable if $0<u \le  1$, and locally exponentially stable if $0<u<1$.
\item The bifurcation problem $g(y,u,\bs 0)$ has a symmetric pitchfork singularity at $(\bs x^*, u^*) =(\bs 0, 1)$. For $u>1$ and $|u-1|$ sufficiently small the Jacobian of \eqref{eq:DynVector} at $\bs x = \bs 0$ possesses a single positive eigenvalue and all other eigenvalues are negative. The ($N-1$)-dimensional stable manifold separates the basins of attraction of the other two steady states bifurcating from the pitchfork, which attract almost all trajectories.
Further, the steady-state branches bifurcating from the pitchfork for $u>1$ are exactly the origin and $\pm y^s \bs 1_N$,  where $\{0,\pm y^s\} $ are the three solutions of the equation $y-uS(y)=0$, $u>1$.
\item For $\bs\beta\neq 0$, the bifurcation problem $g(y, u, \bs\beta )$ is an $N$-parameter unfolding of the symmetric pitchfork. Moreover $\frac{\partial g}{\partial \beta_i}(0,1,\bs 0)=\bar v_i$, where $\bar{\bs v} = (\bar v_1, \ldots, \bar v_N)$ is the null left eigenvector of $L$.  $\bar v_i$ is known as the eigenvector centrality of node $i$ in the network graph\footnote{Eigenvector centrality measures the relative influence of a node in a graph. Because eigenvectors are not unique, it is only defined up to a scaling factor.}.
\end{enumerate}
\end{theorem}

Before proving Theorem~\ref{theorem:unfolding generic}, we discuss its implications, notably in terms of robustness and adaptability of the decision-making process.  The types of robustness and adaptability described in the following remarks are not shared by linear collective decision-making models.

\begin{remark}[Pitchfork as organizing center yields deadlock breaking]
\textup{Theorem~\ref{theorem:unfolding generic} states that the model dynamics~\eqref{eq:DynVector} are organized by the pitchfork. Similar to the honeybee collective decision-making dynamics, the model~\eqref{eq:DynVector} possesses a pitchfork singularity. In the case $\bs\beta=\bs{0}$, before the bifurcation point ($u<1$), the deadlock state $\bs{x} = \bs{0}$ is globally exponentially stable. After the bifurcation point ($u>1$ and $|u-1|$ sufficiently small), the deadlock state $\bs{x} = \bs{0}$ is unstable and two symmetric decision states are jointly almost-globally asymptotically stable (Figure~\ref{FIG: pitch on consensus}). Thus, our agent-based decision-making dynamics~\eqref{eq:DynVector}  qualitatively capture a fundamental observation in the biological social decision-making that increasing social effort (control $u$) breaks deadlock and leads to a decision through a pitchfork bifurcation. Larger values of $u$ might lead to secondary bifurcations, which reflects the local nature of Theorem~\ref{theorem:unfolding generic}, at least for generic strongly connected graphs. Necessary and sufficient conditions for the appearance of secondary bifurcations under symmetric interconnection topologies are provided in~\cite{Fontan2017}.}
\end{remark}
\begin{figure}
\begin{center}
\includegraphics[width=60mm]{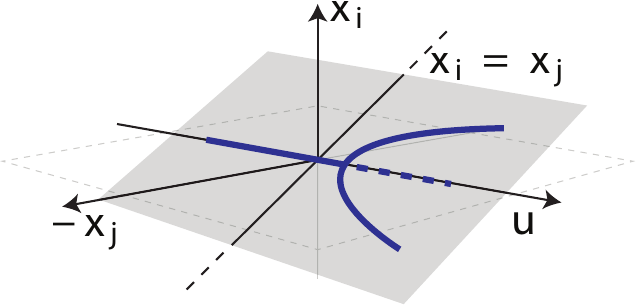}
\caption{For $u=1$ and $\bs\beta=\bs{0}$, dynamics~\eqref{eq:DynVector} exhibit a pitchfork bifurcation at $\bs{x}=\bs{0}$. The steady-state branches emerging at the singularity lie on the consensus manifold $\{x_i = x_j\, |\, i,j \in \{1, ... , N\}\}$ shown in gray.  Branches of stable and unstable solutions are shown as solid and dashed lines, respectively.}
\label{FIG: pitch on consensus}
\end{center}
\end{figure}

\begin{remark}[Robustness to perturbation and unmodeled dynamics]
\textup{It follows from singularity theory that the collective decision-making characterized in Theorem~\ref{theorem:unfolding generic} is {\em robust} in the following sense. For $\bs\beta\neq\bs0$ the symmetric pitchfork generically unfolds, that is, it disappears and breaks into qualitatively different bifurcation diagrams. However, by unfolding theory \cite[Chapter~III]{Golubitsky1985}  only the four cases depicted in Figure~\ref{fig:univ-unfolding}(a) can generically be observed (see \cite[Section~III.7]{Golubitsky1985} for a detailed development in the pitchfork singularity case). These are the persistent (that is, robust) bifurcation diagrams in the universal unfolding of the pitchfork. These and only these four bifurcation diagrams can be observed, almost surely, in any model, or actual system, organized by the pitchfork singularity. Thus, the possible qualitative behaviors are robust (in the sense of the bifurcation phenomena connecting deadlock and decision) under small perturbations and/or unmodeled dynamics. It is exactly in this sense that the pitchfork singularity is the {\em organizing center} of collective decision-making between two alternatives. A detailed analysis of the unfolded dynamics \eqref{eq:DynVector} will be presented in the upcoming work \cite{Franci2018}. Note that all four persistent bifurcation diagrams of the pitchfork can be found in the model~\eqref{eq:DynVector} with generic strongly connected graphs.}
\end{remark}

\begin{figure}[h!]
\centering
\includegraphics[width=\linewidth]{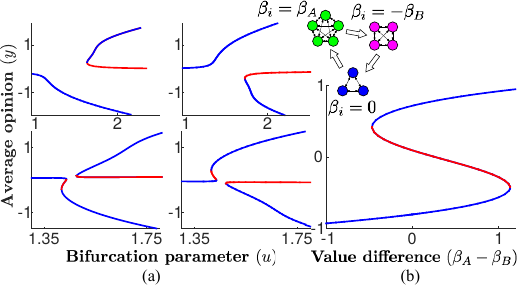}
\caption{Robustness of dynamics \eqref{eq:DynVector} illustrated for the graph shown with $N=12$. A white arrow indicates a directed edge from every node in one all-to-all connected group to another. Blue lines are stable solutions and red lines unstable solutions. (a) The four persistent bifurcation diagrams of the universal unfolding of the pitchfork. Top left: $\beta_A = \beta_B = 5$.  Top right: $\beta_A = \beta_B = -5$.  Bottom left: $\beta_A = 5$, $\beta_B = 4.5$.  Bottom right: $\beta_A = -5$, $\beta_B = -4.5$.  (b) Hysteresis in the universal unfolding of the pitchfork with $\beta_A = 1$ and $\beta_B$ varying.}\label{fig:univ-unfolding}
\end{figure}

\begin{remark}[Sensitivity to system and environmental changes] \textup{Organizing centers provide both robustness {\em and  sensitivity} to nonlinear phenomena. Infinitesimal deviations from the perfectly symmetric state, for example a slight overall preference for one of the two alternatives, will unfold the pitchfork, break deadlock, and enforce convergence to a decision. This ultrasensitive responsiveness provides the mechanism through which decision-making efficiently {\em adapts} to changes in environmental cues or agent preferences and interconnections. And this form of adaptation is robust, because no aberrant behaviors can be observed besides the four persistent bifurcation diagrams prescribed by the organizing center. The coexistence of robustness and sensitivity seems, almost tautologically, a necessary condition for survival and evolution. As in biological systems, not all decisions made through this mechanism will correspond to democratic or optimal decisions because of the influence of the network topology and agent preference strength  \cite{Couzin2011}. In an engineering setting, to ensure that the best decision is made, careful attention must be paid to the relation between system parameters and the opening of the pitchfork. This can be done using constructive sensitivity analysis at the organizing center that is grounded in unfolding theory and Lyapunov-Schmidt reduction. Theorem~\ref{theorem:unfolding generic}(iii) provides such a sensitivity analysis, showing how an agent's influence depends on its eigenvector centrality. Theorem~\ref{theorem:unfolding generic}(iii) predicts the bias in the decision in favor of the smaller group's preference $- \beta_B$, shown in Figure~\ref{fig:univ-unfolding}, because the sum of eigenvector centralities for the smaller informed group is larger than the sum of eigenvector centralities for the larger informed group.} 
\end{remark}

\begin{remark}[Robustness to small fluctuations]
\textup{Theorem~\ref{theorem:unfolding generic} implies a further type of robustness of collective decision-making organized by the pitchfork due to bistability and the associated hysteresis. All the persistent bifurcation diagrams of the pitchfork exhibit bistability between two stable steady states (the two alternatives) for sufficiently large bifurcation parameter, and this implies a hysteresis. In the collective decision-making setting this means that after a decision has been taken it is robustly maintained despite small adjustments in agent preferences. Only sufficiently large preference changes can switch the collective decision to the other alternative. This is illustrated in Figure~\ref{fig:univ-unfolding}(b). At the mathematical level, this robustness can be proved by observing that the unfolding of the pitchfork is represented by a parametrized path in the unfolding of the {\it cusp catastrophe} \cite[Section III.12]{Golubitsky1985}. By traveling this unfolding transversally to these paths by modulating unfolding parameters (instantiated here by agent preferences), we recover the described hysteresis behavior.}
\end{remark}

{\it Proof of Theorem~\ref{theorem:unfolding generic}.}
(i) Let $V(\bs x)=\frac{1}{2}\bs x^T \bs x$.  Then, for $0<u\leq 1$,
\begin{align*}
\dot V&= \bs x^\top(-D\bs x+u A \bs S(\bs x)) \\
&=\bs x^\top(-D \bs x+uD \bs S(\bs x)-u D \bs S(\bs x)+u A \bs S(\bs x))\\
&=- \bs x^\top D(\bs x-u \bs S(\bs x))- u \bs x^TL \bs S(\bs x))\\
&< -u \bs x^\top L \bs S(\bs x) \le 0,\quad \forall \bs x \neq 0, \;\; 
\end{align*}
since $uS$ is a monotone function in the sector $[0,1]$, $D$ is diagonal and positive definite, and  $L$ is  positive semi-definite.  Local exponential stability for $0<u<1$ follows since the linearization of (\ref{eq:DynVector}) at $\bs x = \bs 0$ is
\[
\dot{\delta \bs x}=(-D+uA)\delta \bs x, 
\]
and $(-D+uA)$ is a diagonally dominant and Hurwitz matrix. 

(ii) Let $F(\bs x,u,\bs\beta)=-D\bs x+uAS(\bs x)+\bs\beta$, i.e., the right hand side of dynamics \eqref{eq:DynVector}. Observe that, by odd symmetry of $F$ in $\bs x$,
$$F(-\bs x,u,\bs 0)=-\dot{\bs x}=-F(\bs x,u,\bs 0). $$
That is, for $\bs\beta=0$, $F$ commutes with the action of $-I_N$. It follows by \cite[Proposition~VII.3.3]{Golubitsky1985} that the Lyapunov-Schmidt reduction of $F$ at $(\bs x,u)=(\bs 0,1)$ is also an odd function of its scalar state variable, that is, $g(-y,u,\bs 0)=-g(y,u,\bs 0)$. To show that $g$, and therefore $F$, possesses a pitchfork bifurcation at the origin for $u=1$, it suffices to show that $g_{yyy}(0,1,\bs 0)<0$ and $g_{yu}(0,1,\bs 0)>0$.  This follows because all of the degeneracy conditions in the recognition problem of the pitchfork ($g_{yy}(0,1,\bs 0)=g_u(0,1,\bs 0)=0$) are automatically satisfied by odd symmetry of $g$, and $g(0,1,\bs 0)=g_y(0,1,\bs 0)=0$ because of the properties of the Lyapunov-Schmidt reduction \cite[Equation I.3.23(a)]{Golubitsky1985}.

Let $\bar{\bs v}\in({\rm Im}(L))^\perp$ be a null left eigenvector of $L$, and $P= I_N - \frac{1}{\sqrt{N}} \bs 1 \bar{\bs v}^\top$ be a projector on ${\rm Im}(L)=\bs 1_N^\perp$. Then, using \cite[Equation I.3.23(c)]{Golubitsky1985} it holds that 
\begin{multline*}
g_{yyy}(0,1,\bs 0)=\left\langle\bar{\bs v},d^3F_{\bs 0,1,\bs 0}(\bs 1,\bs 1,\bs 1) \right.\\ \left.
-3d^2F_{\bs 0,1,\bs 0}(\bs 1,L^{-1}Pd^2F_{\bs 0,1, \bs 0}(\bs 1,\bs 1)) \right\rangle,
\end{multline*}
where $\langle \cdot, \cdot \rangle$ denotes the inner product, and $d^k F_{y,u,\bs \beta}$ is the $k$-th order derivative defined by~\cite[Equation I.3.16]{Golubitsky1985}:
\[
d^k F_{\bs x,u,\bs \beta} (\bs v_1, \ldots, \bs v_k) = \]
\[\frac{\partial}{\partial t_1} \ldots \frac{\partial}{\partial t_k} F\left(\sum_{i=1}^k t_i \bs v_i, u, \bs \beta\right) \bigg|_{t_1=\ldots=t_k=0}.
\]

Note that $d^2F_{\bs 0,1,\bs 0}=0_{N\times N\times N}$ because $S''(0)=0$. On the other hand
$$\frac{\partial^3 }{\partial x_l x_k x_h}F_i(\bs x,u,\bs 0)=u\delta_l^k\delta_k^h\delta_h^j a_{ij}S'''(x_j), $$
which implies that $d^3F_{\bs 0,1,\bs 0}(\bs 1,\bs 1,\bs 1)_i=uS'''(0)\sum_{j=1}^N a_{ij}<0$. Since $\bar{\bs v}$ is a non-negative vector and not all entries are zero, it follows that $g_{yyy}(0,1,\bs 0)<0$.

Similarly, using \cite[Equation I.3.23(d)]{Golubitsky1985}, we have
$$g_{uy}(0,1,\bs 0)=\left\langle \bar{\bs v},d\frac{\partial F_{\bs 0,1,\bs 0}}{\partial u}(\bs 1)\right\rangle= \left\langle \bar{\bs v},\left[\sum_{j=1}^N a_{ij}\right]_{i=1}^N \right\rangle>0,$$
where we have already neglected the second-order term depending on $d^2F_{\bs 0,1,0}$, which is zero.

It follows by the recognition problem for the pitchfork \cite[Proposition~II.9.2]{Golubitsky1985} that \eqref{eq:DynVector} undergoes a pitchfork bifurcation at the origin when $u=1$. For $u>1$ and $|u-1|$ sufficiently small, there are exactly three fixed points. The origin is a saddle with an $(N-1)$-dimensional stable manifold corresponding to the $N-1$ negative eigenvalues of $-L$ at the bifurcation and a one-dimensional unstable manifold corresponding to the bifurcating eigenvalue. The other two fixed points are both locally exponentially stable because they share the same $N-1$ negative eigenvalues as the origin and the bifurcating eigenvalue is also negative by \cite[Theorem~I.4.1]{Golubitsky1985}. Noticing that \eqref{eq:DynVector} is a positive monotone system and that all trajectories are bounded for $|u-1|$ sufficiently small, it follows from \cite[Theorem 0.1]{Hirsch1988} that almost all trajectories converge to the two stable equilibria, the stable manifold of the saddle separating the two basins of attractions.
The location of the three equilibria follows by direct substitution in the dynamic equations.

(iii) The first part of statement is just the definition of an $N$-parameter unfolding. The second part follows directly by \cite[Equation~I.3.23(d)]{Golubitsky1985} \hfill$\square$

In an all-to-all network and $ \bs \beta = \bs{0}$, the dynamics~\eqref{eq:DynVector} specialize to
\begin{equation}\label{eq:all-to-all-LTI}
\dot x_i = -(N-1) x_i+\sum_{\substack{ j=1, j\neq i}}^N u S(x_j),
\end{equation}
and Theorem~\ref{theorem:unfolding generic} holds globally in $u$ and $\bs x$.
\begin{corollary}
\label{theorem:all-to-all-unif}
The following statements hold for the stability of invariant sets of dynamics~\eqref{eq:all-to-all-LTI}: 
\begin{enumerate}
\item The consensus manifold is globally exponentially stable for each $u \in \mathbb{R}$, $u\geq 0$; 
\item $\bs{x}=\bs{0}$ is globally exponentially stable for $u \in[0,1)$ and globally asymptotically stable for $u=u^* =1$; 
\item  $\bs{x}=\bs{0}$ is exponentially unstable and there exist two locally exponentially stable equilibrium points $\pm y^s\bs{1}_N$ for $u>1$, where $y^s>0$ is the positive non-zero solution of $-y+uS(y)=0$. In particular, almost all trajectories converge to $\{y^s\bs{1}_N\}\cup\{-y^s\bs{1}_N\}$ for $u>1$.
\end{enumerate}
\end{corollary}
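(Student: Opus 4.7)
The strategy is to exploit the permutation symmetry of the all-to-all graph to decouple the dynamics into scalar consensus and disagreement contractions, to analyze the reduced scalar equation on the consensus manifold using the sector and concavity properties of $S$, and finally to apply Hirsch's theorem to obtain almost-global convergence.

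For (i), I would use the pairwise disagreement Lyapunov function $V(\bs x)=\tfrac{1}{2}\sum_{i<j}(x_i-x_j)^2$. A short computation from~\eqref{eq:all-to-all-LTI} gives
\[
\dot x_i-\dot x_j=-(N-1)(x_i-x_j)-u\bigl(S(x_i)-S(x_j)\bigr),
\]
so monotonicity of $S$ immediately yields $\dot V\le -2(N-1)V$ for every $u\ge 0$. This establishes global exponential contraction to the consensus manifold at rate $N-1$, uniformly in $u$.

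For (ii), statement (i) reduces the asymptotic analysis to the scalar dynamics $\dot y=(N-1)(-y+uS(y))$ on the consensus manifold. Using $S'(0)=1$ (implicit in the pitchfork normalization $u^*=1$) and the sector condition $S(y)/y\le 1$, for $0\le u<1$ one has $y(-y+uS(y))\le (u-1)y^2$, so the origin is globally exponentially stable on the consensus manifold and, via (i), on $\real^N$. For $u=1$, strict concavity of $S$ on $y>0$ (from $\mr{sgn}(S'')=-\mr{sgn}(z)$) together with $S(0)=0$ gives $S(y)<y$ for $y>0$ and $S(y)>y$ for $y<0$, so $\dot y$ is sign-definite away from the origin and global asymptotic (but not exponential) stability follows.

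For (iii), linearization at the origin gives the Jacobian $-(N-1+u)I+u\bs 1\bs 1^\top$ with spectrum $\{(N-1)(u-1),\,-(N-1+u)\}$, the latter of multiplicity $N-1$; thus for $u>1$ the origin is a saddle with a one-dimensional unstable direction along $\bs 1_N$. At any equilibrium, the quantity $(N-1)x_i+uS(x_i)=u\sum_jS(x_j)$ is $i$-independent, and strict monotonicity of $z\mapsto(N-1)z+uS(z)$ forces all $x_i$ equal, so every equilibrium lies on the consensus manifold; the reduced equation $-y+uS(y)=0$ then has only the roots $0$ and $\pm y^s$ by the shape and odd symmetry of $S$. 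Local exponential stability of $\pm y^s\bs 1_N$ follows from the eigenvalues $(N-1)(uS'(y^s)-1)$ and $-(N-1+uS'(y^s))$, both negative because strict concavity gives $uS'(y^s)<uS(y^s)/y^s=1$. Almost-global convergence then follows from the fact that~\eqref{eq:all-to-all-LTI} is cooperative and irreducible ($\partial\dot x_i/\partial x_j=uS'(x_j)>0$ for $i\ne j$) with uniformly bounded trajectories (by saturation of $S$), so Hirsch's theorem~\cite{Hirsch1988}---already invoked in the proof of Theorem~\ref{theorem:unfolding generic}---sends almost every trajectory to an equilibrium, while the origin's stable manifold, being of codimension one, is Lebesgue-null. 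The main obstacle is this last almost-global step: the preceding pieces are clean combinations of the consensus--disagreement splitting with one-dimensional phase-line analysis, but isolating the stable manifold of the saddle and verifying the hypotheses of Hirsch's theorem globally in $\real^N$ requires genuine care.
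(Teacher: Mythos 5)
Your proposal is correct and follows essentially the same route as the paper: the same pairwise disagreement Lyapunov function $V=\tfrac12\sum_{i,j}(x_i-x_j)^2$ gives global exponential contraction to the consensus manifold for all $u\ge 0$, after which the paper reads (ii) and (iii) off the reduced scalar equation $\dot y=(N-1)(-y+uS(y))$. The extra details you supply --- that every equilibrium must lie on the consensus manifold, the explicit eigenvalue computations at $\bs 0$ and $\pm y^s\bs 1_N$, and the appeal to Hirsch's theorem plus the measure-zero stable manifold for the almost-global claim --- are precisely the steps the paper leaves to ``inspection'' or imports from its proof of Theorem~\ref{theorem:unfolding generic}, so there is no substantive difference in approach.
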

\begin{proof}
To prove (i) consider a Lyapunov function $V_{ij}(\bs x) = \frac{(x_i -x_j)^2}{2}$. It follows that
\begin{align*}
\dot V_{ij} (\bs x) &=-(N-1) (x_i - x_j) (x_i -x_j + u (S(x_i) -S(x_j))) \\
& < -(N-1) (x_i - x_j)^2 = -2(N-1) V_{ij}, 
\end{align*}
for all $x_i \ne x_j$. Therefore, for $V(\bs x) = \sum_{i=1}^n \sum_{j=1}^n V_{ij}(\bs x)$,
$
\dot V (\bs x) < -2(N-1) V (\bs x),
$
for all $\bs x \ne \zeta \bs 1_N$,  $\zeta \in \mathbb{R}$. $\dot{V}(\bs x) =  0$ for $x_i = x_j = \zeta$, so by LaSalle's invariance principle, the consensus manifold is globally exponentially stable. 

Using (i), it suffices to study dynamics~\eqref{eq:all-to-all-LTI} on the consensus manifold, where they reduce to the scalar dynamics 
\[
\dot y = -(N-1) y + u(N-1) S(y).
\]
(ii) and (iii) follow by inspection of these scalar dynamics and properties of $S$. 
\end{proof}

\subsection{Model extensions and further possible behaviors}\label{ssec:modelext}

Theorem~\ref{theorem:unfolding generic} shows that in the case of $\bs \beta = \bs 0$, there is a pitchfork bifurcation that results from the $Z_2$ symmetry in the dynamics \eqref{eq:DynVector}.  But even in the case $\bs \beta \neq \bs 0$, there can be $Z_2$ symmetry and thus a symmetric bifurcation.
One example is an all-to-all graph and two equally sized informed groups with $\beta_\text{A} = \beta_\text{B}$. If the size of each of the two informed groups is $n$, $2n\leq N$, then the vector field $F(\bs x, u, \bs \beta)$ commutes with the action of the nontrivial element of $Z_2$ represented by the matrix
$$\gamma=\left[\begin{array}{ccc}
0 & -I_{n} & 0\\
-I_{n} & 0 & 0\\
0 & 0 & -I_{N-2n}
\end{array}\right],\quad \gamma^2=I_{N},$$
where the zero blocks have suitable dimensions (see \cite[Lecture~1]{Fulton2013} for basic definitions and concepts from group representation theory). As in the proof of Theorem~\ref{theorem:unfolding generic}, this symmetry implies the presence of a $Z_2$-symmetric singularity, which for small $\bs\beta$ will again be a pitchfork. We thoroughly analyze a $Z_2$-symmetric informed network in Section~\ref{SSEC:Z2 symm pitch}, using model reduction to a low-dimensional invariant manifold. The general case will be provided in~\cite{Franci2018}.

From the informed $Z_2$-symmetric dynamics \eqref{eq:DynVector}, we can expect another significant behavior: the transition from a supercritical pitchfork to a subcritical pitchfork and emergence of two stable branches due to a stabilizing quintic term and two saddle-node bifurcations. In this case, the cubic term is a $Z_2$-symmetric unfolding term of the quintic pitchfork that modulates it between the standard cubic pitchfork and the subcritical cubic pitchfork. Figure~\ref{FIG:trans_to_quintic} shows this transition for increasing $\beta_A=\beta_B = \beta$ in a $Z_2$-symmetric graph. From a behavior perspective, the interesting fact about the subcritical pitchfork is the appearance of a bistable region between deadlock and decision, which will induce further robustness properties on the resulting decision-making. A rigorous analysis of this transition and its relevance in control problems will also be part of future works. 
 
\begin{figure}
\centering
\includegraphics[width=\linewidth]{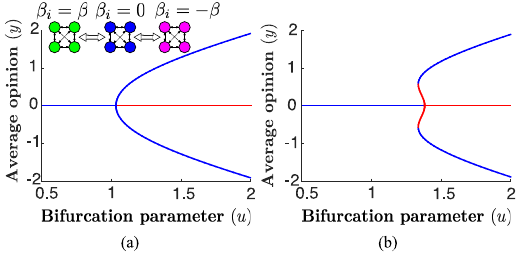}
\caption{Bifurcation in the $Z_2$-symmetric graph shown transitions from supercritical to subcritical with two saddle node bifurcations as $\beta$ increases. (a) $\beta = 1$. (b) $\beta = 3$. White arrows represent all-to-all undirected connections between groups.}\label{FIG:trans_to_quintic}
\end{figure}

Finally, we consider an extension of model \eqref{eq:DynVector} that will be important in the adaptive bifurcation control setting analyzed in Section~\ref{sec:feedback}. We let the social effort parameter $u$ be heterogeneous across the agents by considering the decision dynamics
\begin{equation} \label{eq:DynVector hetero us}
\boldsymbol{\dot{x}} = -D\boldsymbol{x} + UA\boldsymbol{S}(\bs{x}) + \bs{\beta},
\end{equation}
where $U={\rm diag}\big(\bar u+\tilde u_1,\ldots,\bar u+\tilde u_n\big)$ and $\sum_{i=1}^N\tilde u_i=0$. The value $\bar u$ is the average social effort and the $\tilde u_i$'s are the social effort heterogeneities. The evolution of the opinion of agent $i$ is governed by the dynamics
$$\dot x_i=-d_ix_i+\sum_{j=1}^N (\bar u+\tilde u_i) a_{ij}S(x_j) + \beta_i.$$
Let $\tilde{\bs u}=(\tilde u_1, \ldots, \tilde u_N)^T$ be the vector of social effort heterogeneities. The following theorem shows that the same results as in Theorem~\ref{theorem:unfolding generic} qualitatively persist for small heterogeneities in agent social efforts.

\begin{theorem}\label{theorem:unfolding generic hetero us}
The following hold for dynamics~\eqref{eq:DynVector hetero us} with fixed, strongly connected graph and sufficiently small $\tilde u_i$, $i\in\{1,\ldots,N\}$:
\begin{enumerate}
\item There exists a smooth function $\bar u^*(\tilde{\bs u})$ satisfying $\bar u^*(\bs 0)=1$ such that the linearization of~\eqref{eq:DynVector hetero us} for $\bs \beta=\bs 0$ possesses a unique zero eigenvalue at $(\bs x,\bar u)=(\bs 0,\bar u^*(\tilde{\bs u}))$. Moreover, the associated null right eigenvector $\tilde{\bs 1}$ satisfies $\|\tilde{\bs 1}-\bs 1\|_1=O(\|\tilde{\bs u}\|_1)$ and the associated singularity is isolated.
\item Let $g(y,\bar u,\bs 0)$ be the Lyapunov-Schmidt reduction of \eqref{eq:DynVector hetero us} with $\bs\beta=\bs 0$ at $(\bs x,u)=(\bs 0,\bar u^*(\tilde{\bs u}))$. The bifurcation problem $g(y,\bar u,\bs 0)$ has a symmetric pitchfork singularity at $(y, \bar u) =(0, \bar u^*(\tilde{\bs u}))$.
\item For $\bs\beta\neq \bs 0$, the bifurcation problem $g(y, \bar u, \bs\beta )$ is an $N$-parameter unfolding of the symmetric pitchfork.
\end{enumerate}
\end{theorem}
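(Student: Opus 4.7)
The plan is to leverage analytic perturbation theory for simple eigenvalues together with the implicit function theorem (IFT) to reduce each assertion to its homogeneous counterpart established in Theorem~\ref{theorem:unfolding generic}. Linearizing \eqref{eq:DynVector hetero us} about $\bs x = \bs 0$ with $\bs \beta = \bs 0$ yields the parameterized Jacobian
$$M(\bar u, \tilde{\bs u}) = -D + (\bar u I_N + \tilde U) A,$$
where $\tilde U = \mathrm{diag}(\tilde{\bs u})$ and we have used $S'(0)=1$ as in Theorem~\ref{theorem:unfolding generic}. At $(\bar u, \tilde{\bs u}) = (1, \bs 0)$, $M = -L$ has a simple zero eigenvalue with right null eigenvector $\bs 1$ and a strictly positive left null eigenvector $\bar{\bs v}^T$ (by strong connectivity and Perron--Frobenius), while the remaining $N-1$ eigenvalues lie strictly in the open left half-plane.

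For statement (i), analytic perturbation theory yields a smooth branch $(\lambda(\bar u, \tilde{\bs u}), \tilde{\bs 1}(\bar u, \tilde{\bs u}))$ of the simple zero eigenvalue and its right eigenvector, with $\lambda(1, \bs 0) = 0$ and $\tilde{\bs 1}(1, \bs 0) = \bs 1$. The derivative
$$\partial_{\bar u} \lambda(1, \bs 0) = \frac{\bar{\bs v}^T A \bs 1}{\bar{\bs v}^T \bs 1} = \frac{\bar{\bs v}^T D \bs 1}{\bar{\bs v}^T \bs 1} > 0$$
is strictly positive since $A \bs 1 = D \bs 1$ and both $\bar{\bs v}$ and the diagonal of $D$ are strictly positive. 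The IFT applied to $\lambda(\bar u, \tilde{\bs u}) = 0$ produces the smooth $\bar u^*(\tilde{\bs u})$ with $\bar u^*(\bs 0) = 1$. Setting $\tilde{\bs 1}(\tilde{\bs u}) := \tilde{\bs 1}(\bar u^*(\tilde{\bs u}), \tilde{\bs u})$, smoothness and $\tilde{\bs 1}(\bs 0) = \bs 1$ give $\|\tilde{\bs 1} - \bs 1\|_1 = O(\|\tilde{\bs u}\|_1)$ via Taylor expansion. Isolation of the singularity follows from the spectral gap at $\tilde{\bs u} = \bs 0$: the other $N-1$ eigenvalues remain bounded away from the imaginary axis for small $\|\tilde{\bs u}\|_1$ by continuity of the spectrum.

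For statement (ii), the $Z_2$ symmetry $F(-\bs x, \bar u, \bs 0) = -F(\bs x, \bar u, \bs 0)$ persists in the heterogeneous dynamics because heterogeneity enters only as a diagonal prefactor multiplying the odd map $A \bs S(\bs x)$. Hence, by \cite[Proposition~VII.3.3]{Golubitsky1985}, the Lyapunov--Schmidt reduction $g(y, \bar u, \bs 0)$ at $(\bs 0, \bar u^*(\tilde{\bs u}))$ is odd in $y$, so the pitchfork degeneracies $g_{yy} = g_{\bar u} = 0$ hold automatically. The nondegeneracy conditions $g_{yyy}(0, \bar u^*(\tilde{\bs u}), \bs 0) < 0$ and $g_{y \bar u}(0, \bar u^*(\tilde{\bs u}), \bs 0) > 0$ reduce, via \cite[Equations~I.3.23(c)--(d)]{Golubitsky1985}, to weighted sums involving $\tilde{\bs 1}$ and the perturbed left null eigenvector $\tilde{\bar{\bs v}}$; both quantities depend smoothly on $\tilde{\bs u}$ and reduce respectively to $\bs 1$ and $\bar{\bs v}$ at $\tilde{\bs u} = \bs 0$, where the strict inequalities hold by Theorem~\ref{theorem:unfolding generic}(ii). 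Continuity then preserves them for sufficiently small $\|\tilde{\bs u}\|_1$. Statement (iii) follows by the same argument as in Theorem~\ref{theorem:unfolding generic}(iii): \cite[Equation~I.3.23(d)]{Golubitsky1985} gives $\partial g / \partial \beta_i (0, \bar u^*(\tilde{\bs u}), \bs 0) = \tilde{\bar{\bs v}}^T \bs e_i$, and $\tilde{\bar{\bs v}}$ remains componentwise positive for small $\|\tilde{\bs u}\|_1$, so $g$ qualifies as an $N$-parameter unfolding of the pitchfork.

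The main obstacle I anticipate is the bookkeeping needed to quantify the perturbation regime: each conclusion requires its own smallness condition on $\|\tilde{\bs u}\|_1$ (validity of IFT, persistence of the spectral gap, positivity of $\tilde{\bar{\bs v}}$, and the two strict recognition inequalities), and care is needed to intersect these into a common neighborhood so that all conclusions hold simultaneously. Since each condition is open and holds at $\tilde{\bs u} = \bs 0$, this reduces to routine but tedious use of the standard perturbation formulas for simple eigenvalues together with the Lyapunov--Schmidt coefficient formulas in \cite[Chapter~I]{Golubitsky1985}.
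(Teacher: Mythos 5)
Your proof is correct and follows essentially the same route as the paper: an implicit-function-theorem transversality argument at $(\bar u,\tilde{\bs u})=(1,\bs 0)$ whose key nondegeneracy is the positivity of $\bar{\bs v}^T A\bs 1_N$, followed by continuity and the persistence of odd symmetry to transfer the pitchfork recognition conditions from Theorem~\ref{theorem:unfolding generic}. The only (cosmetic) difference is that you apply the implicit function theorem to the simple eigenvalue branch $\lambda(\bar u,\tilde{\bs u})=0$ rather than to $\det J(\bar u,\tilde{\bs u})=0$ via Jacobi's formula and the adjugate, which has the minor added benefit of directly producing the eigenvector branch $\tilde{\bs 1}(\tilde{\bs u})$ and hence the $O(\|\tilde{\bs u}\|_1)$ estimate and the isolation claim that the paper asserts without explicit derivation.
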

\begin{proof}
Let $\tilde F(\bs x,\bar u,\tilde{\bs u},\bs 0)$ denote the right hand side of \eqref{eq:DynVector hetero us} for $\bs\beta=\bs 0$. Observe that $\tilde F(\bs 0,\bar u,\tilde{\bs u},\bs 0)=\bs 0$. We show that there exists a smooth function $\bar u^*(\tilde{\bs u})$ with $\bar u^*(\bs 0)=1$ such that the Jacobian $J(\bar u,\tilde{\bs u})=\frac{\partial F}{\partial\bs x}(\bs 0,\bar u,\tilde{\bs u},\bs 0)$ is singular for $\bar u=\bar u^*(\tilde{\bs u})$ and sufficiently small $\tilde{\bs u}$. Moreover, there exist no other singular points close to $(\bs 0,\bar u^*(\tilde{\bs u}),\tilde{\bs u})$. To show this, we apply the implicit function theorem~\cite[Appendix 1]{Golubitsky1985} to the scalar equation
$$\det\left( J(\bar u,\tilde{\bs u}) \right)=0.$$
Using Jacobi's formula for the derivative of the determinant of a matrix, we obtain
$$\frac{\partial}{\partial \bar u}\det J(\bar u,\tilde{\bs u})=\mathrm{tr}\left(\mathrm{adj}(J)\frac{\partial J}{\partial \bar u}\right),$$
where $\mathrm{adj}(J)$ is the adjugate matrix of $J$~\cite{strang2006linear}. Because $J\mathrm{adj}(J)=\mathrm{adj}(J)J=\det(J)I_N$ and $\det(J(1,\bs 0))=\det L=0$, it follows that at $(\bar u,\tilde{\bs u})=(1,\bs 0)$ the image of $\mathrm{adj}(J)$ is the kernel of $J$ and that the image of $J$ is in the kernel of $\mathrm{adj}(J)$. Recalling that ${\rm rank}\,\mathrm{adj}(J)=N-{\rm rank}\,J=1$, it follows that $\mathrm{adj}(J(1,\bs 0))=c\bs 1_N\bs v_0^T$, where $\bs v_0^T$ is a left null eigenvector of $L$ and $c\neq 0$. Now, at $(\bar u,\tilde{\bs u})=(1,\bs 0)$,
$$\frac{\partial J}{\partial \bar u}=A.$$
$A$ is non-negative and, by the strong connectivity assumption, at least one element in each of its columns is different from zero. Furthermore, $\mathrm{adj}(J)\frac{\partial J}{\partial \bar u} = c\bs 1_N\bs v_0^T A$, and it follows that  $\mathrm{tr}(\mathrm{adj}(J)\frac{\partial J}{\partial \bar u}) =c \bs v_0^T A \bs 1_N$, which is non-zero. Consequently, $\frac{\partial}{\partial \bar u}\det J(\bar u,\tilde{\bs u}) \ne 0$. The existence of the smooth function $\bar u^*(\bs \beta)$ with the properties of the statement now follows directly from the implicit function theorem.

Using continuity arguments and the odd symmetry of \eqref{eq:DynVector hetero us}, the rest of the theorem statement follows by Theorem~\ref{theorem:unfolding generic}.
\end{proof}

\section{Model reduction and value sensitivity in agent-based model}\label{sec:ModelRed}
For special classes of graphs the dynamics \eqref{eq:DynVector} can be reduced to a low-dimensional manifold, which aids quantitative and numerical analysis.  We prove convergence of a reduced-order model in Section~\ref{subsec:ModelRed}. In Section~\ref{SSEC:Z2 symm pitch} we use the reduced model to prove the existence of the pitchfork bifurcation in informed $Z_2$-symmetric  all-to-all networks.  In Section~\ref{subsec:recover-val} we use the reduced model to show directly that our agent-based model~(\ref{eq:DynVectoru1u2}) recovers the value-sensitivity of the biological mean-field model discussed in Section~\ref{sec:HoneyBees}. This allows us to examine sensitivity of performance to parameters, such as group size and distribution of information across the group.

\subsection{Model reduction to low-dimensional, attracting manifold}\label{subsec:ModelRed}
For certain classes of network graph it is possible to identify a globally attracting, low-dimensional manifold to which the dynamics \eqref{eq:DynVector} converge, and to perform analysis on the reduced model. The dimension $N$ of the system is treated as a discrete parameter, allowing for the study of the sensitivity of the dynamics to the sizes of the informed and uninformed populations. As in \cite{Leonard2012}, where the decision-making behavior of animal groups on the move is considered, simulations of~\eqref{eq:DynVector}  show that under the conditions described below, the dynamics exhibit fast and slow timescale behavior. Initially agents with the same preference and neighbors reach agreement, and then in the slow timescale the dynamics of these groups evolve (see Figure~\ref{fig:Reduction}).

Let $n_1$ and $n_2$ be the number of agents with information $\beta_i = \beta_\text{A} = \bar{\beta}_1$ and $\beta_i = -\beta_\text{B} = \bar{\beta}_2$, respectively, and let $n_3 = N - n_1 - n_2$ be the number of agents with no information ($\beta_i = 0 = \bar{\beta}_3$). Let $\mathcal{I}_k \subset \{1, ..., N\}$, $k \in \{1,2,3\}$, be the index set associated with each group. Also assume $$a_{ij} = \begin{cases} \bar{a}_{km}, & \text{if} \ i \in \mathcal{I}_k, \ j \in \mathcal{I}_m, \ \text{and} \ i \neq j, \\ 0, & \text{otherwise}, \end{cases}$$ 
for $i,j \in \{1,...,N\}$, where $\bar{a}_{km} = 1$ if $k = m$. Under these assumptions, each node in the same group $k$ has the same in-degree, $\bar{d}_k= (n_k-1) + \sum_{m \ne k} \bar n_m a_{km}$, where $n_k$ is the cardinality of $\mathcal I_k$, and dynamics \eqref{eq:DynVector} for agent $i\in \mathcal{I}_k$ are
\begin{equation}
\label{eq:red-dyn1}
\dot{x}_i = -\bar{d}_k x_i + u \sum_{\substack{j \in \mathcal{I}_k \\ j \neq i}} S(x_j) + u\! \!\sum_{\substack{m \in \{1,2,3\} \\ m \neq k}} \sum_{j  \in \mathcal{I}_m}\bar{a}_{km}S(x_j) + \bar{\beta}_k.
\end{equation}
Theorem~\ref{theorem:all-to-all-reduction} allows the analysis of \eqref{eq:red-dyn1} to be restricted to the subspace where each agent in the same group has the same opinion. The  theorem is illustrated in the inset of Figure~\ref{fig:Reduction}.

\begin{figure}
\begin{center}
\includegraphics[width=63mm]{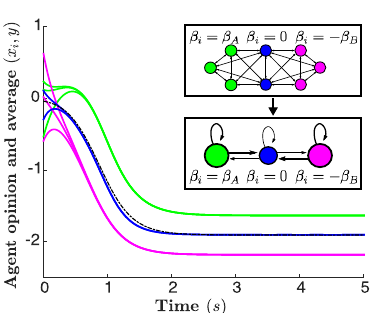}
\caption{Model reduction. Opinions are simulated over time for $N=8$ agents in the undirected network in the top box. $\beta_A = \beta_B = 1$ and $u = 2$. Opinions of agents of the same group (color) aggregate, and then the three group opinions evolve according to the reduced model shown in the bottom box. The black dashed-dotted line is the average group opinion.} 
\label{fig:Reduction}
\end{center}
\end{figure}

\begin{theorem}
\label{theorem:all-to-all-reduction}
Every trajectory of the dynamics~\eqref{eq:red-dyn1} converges exponentially to the three-dimensional manifold
\[
\mathcal{E} = \{\bs{x} \in \mathbb{R}^N | x_i = x_j, \ \forall i,j \in \mathcal{I}_k,\; k = 1,2,3\}.
\]
Define the reduced state as $\bs y = (y_1, y_2, y_3) \in \mathcal{E}$. Then, dynamics on $\mathcal{E}$ are
\begin{align} \nonumber
\dot{y}_1 = -\bar d_1 y_1 &+ u\big((n_1-1)S(y_1) + n_2\bar a_{12} S(y_2)  \\ & + n_3\bar a_{13} S(y_3)\big) + \beta_A \nonumber\\\label{eq:reduced-model}
\dot{y}_2 = -\bar d_2 y_2 &+ u\big((n_2-1) S (y_2) +  n_1\bar a_{21} S (y_1) \\ & +n_3\bar a_{23} S(y_3)\big) - \beta_B \nonumber \\ \nonumber
\dot{y}_3 = -\bar d_3 y_3 &+u\big((n_3-1)S(y_3) + n_1\bar a_{31} S(y_1)   \nonumber \\ &  + n_2\bar a_{32} S(y_2)\big). \nonumber
\end{align}
\end{theorem}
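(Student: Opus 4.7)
The plan is to prove invariance and then exponential attractivity of $\mathcal{E}$ by exploiting the within-group symmetry of the dynamics~\eqref{eq:red-dyn1}, and finally obtain the reduced model by direct substitution. The key observation that will drive the whole argument is that for any two agents $i,j$ in the same group $\mathcal{I}_k$, the external input from the other groups, namely $u\sum_{m\neq k}\sum_{\ell\in\mathcal{I}_m}\bar a_{km}S(x_\ell)+\bar\beta_k$, is identical for $i$ and $j$, and therefore cancels in $\dot x_i-\dot x_j$. This is what makes $\mathcal{E}$ dynamically meaningful.

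First, I would compute the difference dynamics. For $i,j\in\mathcal{I}_k$, using~\eqref{eq:red-dyn1} and the cancellation of cross-group and bias terms, the self-group sum simplifies because $\sum_{\ell\in\mathcal{I}_k\setminus\{i\}}S(x_\ell)-\sum_{\ell\in\mathcal{I}_k\setminus\{j\}}S(x_\ell)=S(x_j)-S(x_i)$, so
\begin{equation*}
\dot x_i-\dot x_j=-\bar d_k(x_i-x_j)-u\big(S(x_i)-S(x_j)\big).
\end{equation*}
Invariance of $\mathcal{E}$ is then immediate: if $x_i=x_j$ for all $i,j\in\mathcal{I}_k$ and all $k$, the right-hand side vanishes. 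Substituting $x_i=y_k$ for $i\in\mathcal{I}_k$ into~\eqref{eq:red-dyn1} yields the reduced system~\eqref{eq:reduced-model} by inspection (the $n_k-1$ factor comes from the $j\neq i$ self-group sum, and the $n_m\bar a_{km}$ factors come from the uniform cross-group couplings).

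To establish exponential attractivity, I would use the block-wise disagreement Lyapunov function
\begin{equation*}
V(\bs x)=\sum_{k=1}^3\sum_{\substack{i,j\in\mathcal{I}_k}}\tfrac{1}{2}(x_i-x_j)^2.
\end{equation*}
Differentiating along trajectories and using the identity above, each summand satisfies
\begin{equation*}
\tfrac{d}{dt}\tfrac{1}{2}(x_i-x_j)^2=-\bar d_k(x_i-x_j)^2-u(x_i-x_j)\big(S(x_i)-S(x_j)\big)\le-\bar d_k(x_i-x_j)^2,
\end{equation*}
where the inequality uses the monotonicity $S'\ge 0$ which makes the second term non-positive. Hence $\dot V\le -2\bar d_{\min}V$ with $\bar d_{\min}=\min_k\bar d_k>0$, giving global exponential convergence of $V$ to zero, i.e.\ exponential convergence of trajectories to $\mathcal{E}$. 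Combined with invariance, this proves the theorem.

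The main obstacle is essentially bookkeeping rather than technique: one must carefully verify that the ``cross-group'' contribution $u\sum_{m\neq k}\sum_{\ell\in\mathcal{I}_m}\bar a_{km}S(x_\ell)$ really is identical for every $i\in\mathcal{I}_k$ (this uses the assumption $a_{ij}=\bar a_{km}$ depending only on group membership) and that the self-group sum reorganizes exactly into the $-u(S(x_i)-S(x_j))$ coupling responsible for contraction. Once these cancellations are handled cleanly, the Lyapunov argument is a short calculation and no invariance-principle refinement (LaSalle, center-manifold) is needed, since contraction is strict as soon as $\bar d_k>0$, which is guaranteed because each group is non-empty and the within-group clique gives $\bar d_k\ge n_k-1$, with the cross-group terms only increasing it further.
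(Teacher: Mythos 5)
Your proof is correct and takes essentially the same route as the paper: the same block-wise disagreement Lyapunov function $V=\sum_k\sum_{i,j\in\mathcal{I}_k}\tfrac12(x_i-x_j)^2$, the same cancellation of cross-group and bias terms in $\dot x_i-\dot x_j$, and the same use of monotonicity of $S$ to discard the coupling term, followed by substitution to get the reduced model. If anything, your conclusion of exponential convergence directly from the strict differential inequality $\dot V\le -2\bar d_{\min}V$ is cleaner than the paper's appeal to LaSalle, which by itself would only give asymptotic convergence.
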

\begin{proof}
Let $V(\bs x) = \sum_{k = 1}^3 V_k(\bs x)$, where $V_k(\bs x) = \frac{1}{2}	\sum_{i \in \mathcal{I}_k} \sum_{j \in \mathcal{I}_k} (x_i - x_j)^2,\ \text{for}\ k \in \{1,2,3\}.$
It follows that
\begin{align*}
\dot{V}_k(\bs x) &= \sum_{i \in \mathcal{I}_k} \sum_{j \in \mathcal{I}_k} (x_i - x_j)(\dot x_i - \dot x_j) \\
&\!\!\! \!\!\! \!\!\!\!\!\! = \sum_{i \in \mathcal{I}_k} \sum_{j \in \mathcal{I}_k} \! \!\! \big( \! \!- \bar d_k(x_i - x_j)^2 - u(x_i - x_j)(S(x_i) - S(x_j))\big) \\
&\!\!\! \!\!\! \!\!\!\!\!\!  \leq - \bar d_k V_k (\bs x),
\end{align*}
so $\dot V(\bs x) \leq - \bar d_kV(\bs x)$. By LaSalle's invariance principle, every trajectory of \eqref{eq:red-dyn1} converges exponentially to the largest invariant set in $V(\bs x) = 0$, which is $\mathcal{E}$. Let $y_k = x_i$, for any $i \in \mathcal{I}_k, \ k \in \{1,2,3\}$.  Then dynamics \eqref{eq:red-dyn1} reduce  to \eqref{eq:reduced-model}.  
\end{proof}

\subsection{Pitchfork bifurcation in informed $Z_2$-symmetric all-to-all networks}
\label{SSEC:Z2 symm pitch}
Dynamics \eqref{eq:DynVector} are $Z_2$-symmetric if $\beta_A = \beta_B = \beta$, $n_1 = n_2 = n$, $2n \leq N$, and the graph is symmetric with respect to the two informed groups and their influence on the uninformed group. This symmetry is satisfied for the class of graph discussed in Section~\ref{subsec:ModelRed} if  $\bar a_{km} = \bar a_{mk}$, for each $k, m \in \{1,2,3\}$, and $\bar a_{13} = \bar a_{23}$. For this class, $Z_2$ symmetry means that reversing the sign of $\beta_A$ and $\beta_B$ is equivalent to applying the transformation $\bs x \mapsto \bs{-x}$.

Consider an all-to-all graph with unit weights and $\beta_A = \beta_B = \beta$, $n_1 = n_2 = n$, $n_3 - 2n \geq 0$, which make the dynamics \eqref{eq:DynVector} $Z_2$-symmetric as discussed in Section~\ref{ssec:modelext}. We can find an approximation $\hat u^*$ to the bifurcation point $u^*$ by examining the reduced dynamics \eqref{eq:reduced-model}, which specialize to
\begin{align} \nonumber
\dot{y}_1 = -(N-1)y_1 &+u\big((n-1)S(y_1)\\ \nonumber &+n S(y_2)+n_3 S(y_3)\big) + \beta\\ \label{eq:reduced-model-ata}
\dot{y}_2 =  -(N-1)y_2 &+ u\big( nS (y_1)\\ \nonumber &+(n-1) S (y_2)+n_3 S(y_3)\big) - \beta\\ \nonumber
\dot{y}_3=  -(N-1)y_3 &+u\big( n S(y_1)\\ \nonumber &+n S(y_2)+(n_3-1)S(y_3)\big).
\end{align}
Because of $Z_2$ symmetry, the deadlock state $\bs{y}^*(u,\beta) = (y^*(u,\beta),-y^*(u,\beta),0)$ is always an equilibrium, where $y^*(u,\beta)$ is the solution to 
\begin{equation}
\label{eq:ystarred}
(N-1)y^* + uS(y^*) - \beta = 0.
\end{equation}
When $\beta = 0$, $y^*(u,0)=0$ for all $u\in\mathbb R$. When $\beta\neq0$, the implicit function theorem ensures that $y^*(u,\beta)$ depends smoothly on $u$ and $\beta$.
By Taylor expansion, an approximation to ${y}^*(u,\beta)$ can be found, and the bifurcation point where deadlock becomes unstable can also be approximated. To compare theoretical and numerical results, we let $S(\cdot)=\tanh(\cdot)$ in Theorem~\ref{thm:bif-pt} but the computations are general.

\begin{theorem}\label{thm:bif-pt}
The following hold for dynamics~\eqref{eq:reduced-model-ata} with $S(\cdot) =\tanh(\cdot)$:  
\begin{enumerate}
\item the equilibrium  $\bs{y}^*(u,\beta) = (y^*(u,\beta),-y^*(u,\beta),0)$ satisfies
\begin{equation}\label{eq:eqm-pt}
y^*(u,\beta) = \frac{1}{N-1+u}\beta + \frac{u}{3(N-1+u)^4}\beta^3 + \mathcal{O}(\beta^5);
\end{equation} 
\item the equilibrium $\bs{y}^* = (y^*,-y^*,0)$ is singular for 
\begin{equation}
{u}^* = 1 + \frac{(1+3N^3)^2(N-n_3)}{9N^9}\beta^2 + \mathcal{O}(\beta^4);
\label{eq:ustar-approx}
\end{equation}
\item for sufficiently small $\beta$, the singularity at $u=u^*$ is a pitchfork.
\end{enumerate}
\end{theorem}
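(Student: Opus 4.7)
The plan has three steps, one per claim. Part~(i) is a Taylor inversion of the scalar equation~\eqref{eq:ystarred}. Using $\tanh(y) = y - y^3/3 + O(y^5)$, I substitute $y^* = c_1\beta + c_3\beta^3 + O(\beta^5)$ (only odd powers appear because~\eqref{eq:ystarred} is odd under $(y^*,\beta)\mapsto(-y^*,-\beta)$) and match coefficients of $\beta$ and $\beta^3$. These linear equations give $c_1 = 1/(N-1+u)$ and $c_3 = u c_1^3/(3(N-1+u)) = u/(3(N-1+u)^4)$, which are smooth functions of $u$.

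For Part~(ii), I evaluate the Jacobian of~\eqref{eq:reduced-model-ata} at $\bs y^*(u,\beta) = (y^*,-y^*,0)$. Because $S'$ is even, the matrix takes the $Z_2$-symmetric block form
$$J(u,\beta)=\begin{pmatrix} a & b & c\\ b & a & c\\ d & d & e\end{pmatrix},$$
where $a,b,c,d,e$ depend on $u$ and on $\sigma := S'(y^*(u,\beta))$, with in particular $a-b = -(N-1) - u\sigma$. A direct expansion yields $\det J = (a-b)\bigl[(a+b)e - 2cd\bigr]$. The factor $a-b$ is strictly negative, so the singular set is the zero set of $F(u,\beta) := (a+b)e - 2cd$. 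At $\beta=0$ we have $\sigma=1$, and the expression $F(u,0)$ simplifies (using $N = 2n+n_3$) to $F(u,0)=-(N-1)(u+N-1)(u-1)$, so $u=1$ is a simple root and $F_u(1,0)=-N(N-1)\neq 0$. The implicit function theorem then produces a smooth branch $u^*(\beta)$ with $u^*(0)=1$. By $Z_2$ symmetry ($F$ is even in $\beta$) this branch is even in $\beta$; writing $u^*(\beta) = 1 + \alpha\beta^2 + O(\beta^4)$ and matching $\beta^2$ coefficients in $F(u^*(\beta),\beta)=0$ reduces to $\alpha = -F_{\beta\beta}(1,0)/(2F_u(1,0))$. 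Since $F$ depends on $\beta$ only through $\sigma$ and $\partial_\beta\sigma|_{\beta=0}=0$ (because $S''(0)=0$), the chain rule reduces $F_{\beta\beta}(1,0)$ to a product of $\partial_\sigma F(1,1)$ (evaluable in closed form) and $\partial_\beta^2\sigma(1,0) = -2c_1(1)^2 = -2/N^2$ (from Part~(i)).

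For Part~(iii), I verify the pitchfork recognition conditions at $(u,\bs y) = (u^*(\beta), \bs y^*(u^*(\beta),\beta))$, following the template of Theorem~\ref{theorem:unfolding generic}. The reduced system~\eqref{eq:reduced-model-ata} is $Z_2$-equivariant under the involution $\gamma:(y_1,y_2,y_3)\mapsto(-y_2,-y_1,-y_3)$ combined with $\beta\mapsto-\beta$, and the branch $\bs y^*$ lies in the $\gamma$-fixed subspace. The null right eigenvector of the Jacobian at the singular point therefore lies in the $-1$ eigenspace of $\gamma$, so the Lyapunov-Schmidt reduction $g(y,u,\beta)$ is odd in $y$ for each fixed $\beta$. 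Oddness automatically gives $g(0,u^*,\beta)=g_y(0,u^*,\beta)=g_{yy}(0,u^*,\beta)=g_u(0,u^*,\beta)=0$, so it remains only to verify the non-degeneracies $g_{yyy}\neq 0$ and $g_{yu}\neq 0$. Both of these hold at $\beta=0$ by Theorem~\ref{theorem:unfolding generic}, and therefore hold in a neighborhood of $\beta=0$ by continuity.

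The principal obstacle is the bookkeeping in Part~(ii): although each step is mechanical, one must propagate the $\beta$-dependence through $\sigma$ carefully, exploit the algebraic identities $2n+n_3=N$ to collapse $F(u,0)$ into a product of simple factors, and verify that the coefficient $\alpha$ obtained by the chain-rule formula simplifies to the expression in~\eqref{eq:ustar-approx}. Part~(iii) is essentially a reduction to singularity theory machinery already developed for Theorem~\ref{theorem:unfolding generic}; the only subtle point is recognizing that the relevant $Z_2$ action on the three-dimensional reduced system acts non-trivially on the bifurcating eigenvector.
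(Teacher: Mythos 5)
Your strategy coincides with the paper's: part~(i) is the same Taylor inversion of~\eqref{eq:ystarred}, part~(ii) locates the singular $u$ from the determinant of the $3\times 3$ Jacobian and expands in $\beta$, and part~(iii) invokes $Z_2$-equivariance plus persistence of the pitchfork exactly as in the paper (which cites \cite[Theorem VI.5.1]{Golubitsky1985}; your direct verification of the recognition conditions $g_{yyy}\neq0$, $g_{yu}\neq0$ by continuity from Theorem~\ref{theorem:unfolding generic} is equivalent). Your execution of~(ii) is in fact cleaner than the paper's: the factorization $\det J=(a-b)\bigl[(a+b)e-2cd\bigr]$ with $a-b=-(N-1)-u\sigma<0$, the collapse $F(u,0)=-(N-1)(u+N-1)(u-1)$, and the implicit-function-theorem/chain-rule extraction of $\alpha=-F_{\beta\beta}(1,0)/(2F_u(1,0))$ all check out, whereas the paper works with the full transcendental expression for the determinant and matches series coefficients.

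The one point that will not go as you claim is the final reconciliation with~\eqref{eq:ustar-approx}. Carrying your numbers through ($F_u(1,0)=-N(N-1)$, $\partial_\sigma F(1,1)=-2n(N-1)$, $\partial_\beta^2\sigma(1,0)=-2/N^2$) gives $\alpha=2n/N^3=(N-n_3)/N^3$, \emph{not} $(1+3N^3)^2(N-n_3)/(9N^9)$; the two agree only to leading order in $1/N$, since $(1+3N^3)^2/(9N^9)=1/N^3+2/(3N^6)+1/(9N^9)$. A direct check with $N=3$, $n=n_3=1$ (where the singularity condition reduces to $u(1+u)\,\mathrm{sech}^2(y^*)=2$) gives $\alpha=2/27=(N-n_3)/N^3$ exactly, confirming that your value is the correct $\beta^2$ Taylor coefficient; the paper's displayed coefficient equals $\frac{N-n_3}{N}\bigl(\frac1N+\frac1{3N^4}\bigr)^2$, i.e., it appears to fold the cubic correction $y_{III}$ of $y^*$ into a term where only $y_I^2$ belongs at order $\beta^2$. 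So your proof is sound but proves a (marginally corrected) version of~\eqref{eq:ustar-approx}; state your coefficient and note that it matches the paper's to the accuracy at which either is used. One last minor point: the involution $\gamma:(y_1,y_2,y_3)\mapsto(-y_2,-y_1,-y_3)$ already commutes with~\eqref{eq:reduced-model-ata} at \emph{fixed} $\beta$ (no need to pair it with $\beta\mapsto-\beta$), and it is this fixed-$\beta$ equivariance that makes the Lyapunov--Schmidt reduction odd in $y$; your argument needs that version, so drop the sign flip on $\beta$.
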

\begin{proof}
We  begin with (i). Consider the Taylor series expansion of $y^*(u,\beta)$ with respect to $\beta$:
\begin{equation}\label{eq:eqm-taylor}
y^*(u,\beta) = \beta y_I + \beta^2 y_{II} + \beta^3 y_{III} + \beta^4 y_{IV} + \mathcal{O}(\beta^5).
\end{equation}
Substitute~\eqref{eq:eqm-taylor} for ${y}^*(u,\beta)$ into \eqref{eq:ystarred} and differentiate with respect to $\beta$ to get 
\[
(N-1){y^{*}}'(u,\beta) + u\text{sech}^2\big(y^*(u,\beta)\big){y^*}'(u,\beta) - 1 = 0.
\]
Letting $\beta = 0$ yields $y_I = \frac{1}{N-1+u}$. Proceeding similarly for higher order derivatives gives $y_{II} = y_{IV} = 0$ and
$y_{III} = \frac{u}{3(N-1+u)^4}.$ Substituting these values into~\eqref{eq:eqm-taylor} yields~\eqref{eq:eqm-pt}, establishing (i). 

At a singular point $u^*$, the Jacobian of \eqref{eq:reduced-model-ata} computed at $\bs y^*$ drops rank. The Jacobian of~\eqref{eq:reduced-model-ata} at $\bs y^*$ is
\[
\left[\begin{smallmatrix}
-(N-1) + u(n-1) S'(y^*) & u n S'(y^*) & u n_3 \\
un S'(y^*) & -(N-1) + u(n-1) S'(y^*) & u n_3 \\
u n S'(y^*) & u n S'(y^*) & -(N-1)+ u(n_3-1)
\end{smallmatrix}\right],
\]
where we have used the fact that $S'(\cdot)$ is an even function. 
For $S(\cdot) = \text{tanh}(\cdot)$ the determinant $d$ of the Jacobian is
\begin{multline*}
\!\!\! d =   -\frac{1}{4}\eta(-1 + N + 2u + \eta \text{cosh}(2y_1))(\eta + 3u + n_3u - 2Nu \\ - 2u^2 
+ (\eta + u - n_3 u)\text{cosh}(2y_1))\text{sech}^4(y_1), 
\end{multline*}
with $\eta = N-1$. A positive $u = u^*$ for which $d=0$ satisfies
\begin{multline*}
u^* = \frac{1}{4}\Big(3 + n_3 - 2N + \text{cosh}(2y^*) - n_3\text{cosh}(2y^*) \\
+ \sqrt{16\eta \text{cosh}^2(y^*) + (3 + n_3 - 2N - (-1 + n_3)\text{cosh}^2(2y^*))}\Big).
\end{multline*}
Note that $y^*$ is also a function of $u^*$ and the above equation is a transcendental  equation in $u^*$, which can be solved numerically. To compute $u^*$ we use the Taylor Series expansion 
$u^*(\beta) = 1 + u_I^* \beta + u_{II}^* \beta^2 + u_{III}^* \beta^3 + O(\beta^4)$ and match coefficients to 
obtain~\eqref{eq:ustar-approx}.

To prove that the singular point $(\bs y^*,u^*,\beta)$ corresponds to a pitchfork we invoke singularity theory for $Z_2$-symmetric bifurcation problems~\cite[Chapter VI]{Golubitsky1985}. By Theorem~\ref{theorem:unfolding generic}, the singular point $(\bs y^*,u^*,\beta)$ is a pitchfork for $\beta=0$. Because~\eqref{eq:reduced-model-ata} is $Z_2$-symmetric, for sufficiently small $\beta>0$ we obtain a small $Z_2$-symmetric perturbation of the pitchfork at $\beta=0$. Invoking genericity of the pitchfork in $Z_2$-symmetric systems \cite[Theorem VI.5.1]{Golubitsky1985}, we conclude that \eqref{eq:reduced-model-ata} possesses a pitchfork at $u=u^*$.
\end{proof}

\begin{remark}[Deadlock breaking in informed symmetric case] \textup{Theorem~\ref{thm:bif-pt} shows persistence of the bifurcation in Corollary~\ref{theorem:all-to-all-unif} under $Z_2$ symmetry and weakly informed agents ($\beta$ small). For small social effort ($u<u^*$), the undecided state, and therefore deadlock, is the only stable equilibrium despite the presence of informed agents, but for sufficiently large social effort ($u>u^*$) the undecided state is unstable. For $\beta \neq 0$ this does not  imply that deadlock has been broken, because differences in opinions can maintain disagreement ($\delta \neq 0$) for moderate social effort. The adaptive bifurcation dynamics introduced in Section~\ref{sec:feedback} overcomes this  by implementing feedback control that increases the social effort beyond the destabilizing of the undecided state to yield agreement as well and thus deadlock breaking.}
\end{remark}

\begin{remark}[Influence of system parameters] \textup{The approximation  \eqref{eq:ustar-approx} of  $u^*$ depends on information strength $\beta$, total group size $N$ and uninformed group size $n_3$. It thus explicitly describes the sensitivity of the bifurcation to group size, information strength and distribution of information. We  further illustrate this influence in Section~\ref{subsec:influence}.}
\end{remark}

\subsection{Value-sensitive decision-making in $Z_2$-symmetric networks}\label{sec:Results}
Returning to the timescale of \eqref{eq:DynVectoru1u2}, we show how the agent-based dynamics \eqref{eq:DynVectoru1u2} recover the value-sensitivity and performance of the honeybee decision-making dynamics studied with the mean-field model in \cite{Pais2013}, and discussed in Section~\ref{sec:HoneyBees}. We then use the agent-based model to examine the influence on performance of system parameters, including size of the group and strength and distribution of information across the group.

\subsubsection{Value-sensitivity and performance in agent-based model}\label{subsec:recover-val}
To show  value-sensitivity and the associated robustness and adaptability in the agent-based model, we examine the dynamics \eqref{eq:DynVectoru1u2} with alternatives of equal value $\nu_A=\nu_B = \nu$, inertia $u_I = 1/\nu$, and bifurcation parameter $u_S = u \cdot u_I =\frac{u}{\nu}$. Applying \eqref{eq:ustar-approx} gives the approximation $\hat u_S^*$ to the bifurcation point for  \eqref{eq:DynVectoru1u2} as $u_S^* = \hat u_S^* + \mathcal{O}(\nu^7)$, where 
\begin{equation} \label{eq:muhat}
\hat{u}_S^* = \frac{1}{\nu} + \frac{(1 +3N^3)^2(N-n_3)}{9N^9}\nu^3.
\end{equation}
Figure~\ref{fig:AnalysisResults}(a) shows how well  $\hat u_S^*$ approximates $u_S^*$ computed using MatCont continuation software \cite{Govaerts:2015aa}.  As in the case of the honeybee mean-field model, the bifurcation point in the agent-based model depends inversely on the value of the alternatives $\nu$ (see \eqref{eq:muhat} and Figure~\ref{fig:AnalysisResults}(a)).  Thus,  our agent-based decision-making model recovers the value-sensitive decision-making of the honeybee mean-field model.

This value-sensitivity  is demonstrated further in Figure~\ref{fig:AnalysisResults}(b), where bifurcation diagrams for the agent-based model are given for a range of values $\nu$.  As $\nu$ is increased, the bifurcation point decreases and  the sharpness of the bifurcation branches increases, representing a faster increase in average opinion.

\subsubsection{Influence of system parameters in agent-based model} \label{subsec:influence}
An advantage of the agent-based framework is that it makes it possible to systematically study sensitivity of the dynamics to model parameters including those that describe network structure and heterogeneity.
An examination of \eqref{eq:muhat} shows that $\hat{u}_S^*$ decreases with increasing total group size $N$, implying that less social effort is required to make a decision with a larger group. In the limit as $N$ increases, $\hat u^*_S = \frac{1}{\nu}$.

Figure~\ref{fig:mustar-with-nU} shows the inverse relationship between $\nu$ and $\hat{u}_S^*$ for different values of $n_3$ with fixed $N$ and $n_1=n_2 = \frac{N-n_3}{2}$.   As $n_3$ increases, the number of informed agents decreases, and the curve drops, implying that increasing the number of uninformed agents reduces the requirement on social effort to destabilize deadlock.   This result suggests that the agent-based dynamics could be mapped to describe the schooling fish decision dynamics discussed in \cite{Couzin2011}, where it is shown that increasing the number of uninformed agents allows the group to choose the alternative preferred by the majority over a more strongly influencing but minority preference.

\begin{figure}
	\centering
	\includegraphics[width=\linewidth]{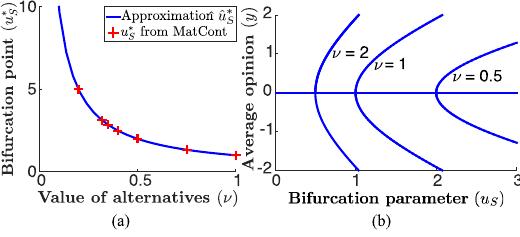}
\caption{Value sensitivity in the agent-based model~\eqref{eq:reduced-model-ata} for alternatives with equal value $\nu$ and informed and uninformed group sizes $n_1=n_2= 10$ and $n_3 = 80$. (a) The blue line shows the approximation $\hat u_S^*$ \eqref{eq:muhat} while the red crosses show $u_S^*$ computed numerically using MatCont continuation software. (b) Bifurcation diagrams for three values of $\nu$.}
	\label{fig:AnalysisResults}
\end{figure}

	\begin{figure}[t]
	\centering
    \includegraphics[width=50mm]{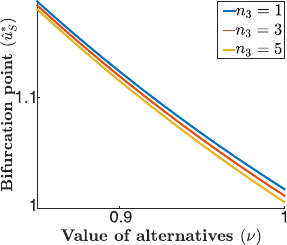}
    \caption{The inverse relationship between $\hat u_S^*$ from \eqref{eq:muhat} and the equal value of the alternatives $\nu$ in the agent-based model for three values of  $n_3$ with $N=7$ and $n_1=n_2 = \frac{N - n_3}{2}$.}
        \label{fig:mustar-with-nU}
	\end{figure}

\section{Adaptive control of bifurcation}\label{sec:feedback}

Studying the decision-making process as a control system allows us to add further layers of dynamics beyond those considered in \cite{Seeley12} and \cite{Pais2014}. A natural extension to our model is to regard the social effort parameter $u$ as a control input, and to give it an adaptive feedback dynamic. We have shown that the level of social effort required to break deadlock depends on system parameters that may not be known ahead of time, so it can be highly useful to allow the system to adapt to its circumstances.

We introduce to the agent-based model a slow feedback dynamic that drives the magnitude of opinion of the group $|y|$ to a prescribed threshold value $y_{th}>0$, thereby ensuring a decision is made. As the system is decentralized, each agent updates its own social effort control $u_i$.  Thus, we use the extended model \eqref{eq:DynVector hetero us}. Moreover, we do not make the assumption that agents can directly access the average opinion $y$.

The proposed adaptive controller consists of two phases. We first let each agent estimate the group average $y$ using the finite-time dynamic consensus algorithm proposed in \cite{George:2017aa}:
\begin{subequations}
\begin{align}
\dot{\boldsymbol{w}} & = -\alpha\text{sgn}(L\hat{\boldsymbol{y}}) \label{eq:con_alg_1}\\
\hat{\boldsymbol{y}} & = L\boldsymbol{w}+\boldsymbol{x}, \label{eq:con_alg_2}
\end{align}
\end{subequations}
where $\hat{\bs y}$ is the vector of agent estimates of $y= \sum_{i=1}^Nx_i$, $w_i$ are auxiliary variables, and $\alpha>0$ is the estimator gain. During this phase, $\dot u_i=\dot x_i=0$ for all $i$. It is shown in \cite[Theorem 1]{George:2017aa} that the consensus algorithm given by \eqref{eq:con_alg_1}-\eqref{eq:con_alg_2} guarantees that the error $\tilde{\boldsymbol{y}} = \hat{\boldsymbol{y}} - \frac{1}{N}\boldsymbol{1}_N^T\boldsymbol{x}\boldsymbol{1}_N=\hat{\bs y}-y\bs 1_N$ is globally finite-time convergent to zero. The convergence time $s^*$ is explicitly given by
$$s^* \le  \frac{||\tilde{\boldsymbol{y}}(s_0)||}{\lambda_2(L)},$$
where $\lambda_2(L)$ is the second smallest eigenvalue of $L$. Therefore $\hat y_i(s)=y$ for all $s\geq s^*$. Here, we assume that each agent can compute a lower bound on $\lambda_2(L)$. This can be accomplished distributedly using algorithms developed in~\cite{aragues2014distributed}.

For $s> s^*$, we let $x_i$ and $u_i$, $i \in \{1, \ldots, N\}$, evolve according to the two-time scale adaptive dynamics
\begin{IEEEeqnarray*}{rCl}
\dot{\bs x}&=&-D\boldsymbol{x} + UAS(\boldsymbol{x}) +\bs{\beta},\\
\dot{\bs u}&=&\epsilon\left(y_{th}^2\boldsymbol{1}_N - \hat{\boldsymbol{y}}^2\right),\\
&=&\epsilon\left(y_{th}^2- y^2\right)\boldsymbol{1}_N,
\end{IEEEeqnarray*}
where $U={\rm diag}(u_1,\ldots,u_N)$ and $\hat{\bs{y}}^2$ is the vector of the squares of the elements of $\hat{\bs y}$. We omit specifying $s\geq s^*$ from now on. Let $\bar u = \frac{1}{N}\sum_{i=1}^N u_i$, then for $s >s^*$, $\dot{\bar u} = \dot u_i$, for each $i$. Thus, the social effort differences $\tilde u_i=u_i-\bar u$ are constant and the adaptive dynamics reduce to
\begin{IEEEeqnarray}{rCl}\label{aftersstarx}
\IEEEyesnumber
\dot{\bs x}&=&-D\boldsymbol{x} + UAS(\boldsymbol{x}) +\bs{\beta},\IEEEyessubnumber\\
\dot{\bar u}&=&\epsilon\left(y_{th}^2- y^2\right),\IEEEyessubnumber
\end{IEEEeqnarray}
where $U={\rm diag}(\bar u+\tilde u_1,\ldots,\bar u+\tilde u_N)$.

We study the behavior of \eqref{aftersstarx} using geometric singular perturbation theory \cite{Fenichel1979a}. We first find a suitable low-dimensional invariant manifold for \eqref{aftersstarx} by means of the center manifold theorem \cite[Theorem 3.2.1]{Guckenheimer2002}.
To use the center manifold computation we extend \eqref{aftersstarx} with dummy dynamics for $\epsilon$ and $\bs \beta$~\cite[\S 18.2]{wiggins2003introduction}:
\begin{IEEEeqnarray}{rCl}\label{aftersstarx_dummy}
\IEEEyesnumber
\dot{\bs x}&=&-D\boldsymbol{x} + UAS(\boldsymbol{x}) +\bs{\beta},\IEEEyessubnumber\label{aftersstarx_dummya}\\
\dot{\bar u}&=&\epsilon\left(y_{th}^2- y^2\right),\IEEEyessubnumber\\
\dot\epsilon&=&0,\IEEEyessubnumber\\
\dot{\bs\beta}&=& \bs 0.\IEEEyessubnumber
\end{IEEEeqnarray}

By Theorem~\ref{theorem:unfolding generic hetero us}, if the graph is strongly connected, the linearization of \eqref{aftersstarx_dummy} at $(\bs x, \bar u, \epsilon, \bs \beta) = (\bs 0,\bar u^*(\tilde{\bs u}),0,\bs 0)$ has $N-1$ eigenvalues with negative real part and $3+N$ zero eigenvalues, with corresponding null eigenvectors
\begin{equation*}
\bs e_{\tilde{\bs 1}}=\begin{bmatrix}\tilde{\bs 1} \\ 0 \\ 0 \\ \bs 0\end{bmatrix},\bs e_u= \begin{bmatrix} \bs{0} \\ 1 \\ 0 \\ \bs 0 \end{bmatrix}, \bs e_\varepsilon=\begin{bmatrix}\bs{0} \\ 0 \\ 1 \\ \bs 0 \end{bmatrix},\bs e_{\bs\beta,i}\begin{bmatrix}\bs{0} \\ 0 \\ 0 \\ \bs e_i \end{bmatrix},\ i=1,\ldots,N, 
\end{equation*}
where $\tilde{\bs 1}$ is defined in Theorem~\ref{theorem:unfolding generic hetero us} and $\bs e_i$ is the $i$-th vector of the standard basis of $\mathbb R^N$. It follows by the center manifold theorem, that \eqref{aftersstarx_dummy} possesses an $(N+3)$-dimensional center manifold $W^c={\rm span}\{\bs e_{\tilde{\bs 1}},\bs e_u,\bs e_\varepsilon,\bs e_{\bs\beta,i},\ i=1,\ldots,N\}$ that is exponentially attracting. Dropping the dummy dynamics, the restrictions of \eqref{aftersstarx_dummy} to $W^c$ are
\begin{IEEEeqnarray}{rCl}\label{aftersstarx_center}
\IEEEyesnumber
\dot y_c&=&g_c(y_c,\bar u,\bs\beta),\IEEEyessubnumber\label{aftersstarx_centera}\\
\dot{\bar u}&=&\epsilon\left(y_{th}^2- y^2\right),\IEEEyessubnumber
\end{IEEEeqnarray}
where $g_c$ is the reduction of the vector field~\eqref{aftersstarx_dummya} onto its center manifold. Similar to the Lyapunov-Schmidt reduction, the center manifold reduction also preserves symmetries of the vector-field~\cite[\S 1.3]{golubitsky2003symmetry}. It follows similarly to
Theorem~\ref{theorem:unfolding generic hetero us} that for $\bs\beta=\bs 0$, the reduced fast vector field $g_c(y_c,\bar u,\bs 0)$ possesses a $Z_2$-symmetric pitchfork singularity at $(y_c,\bar u)=(0,\bar u^*(\tilde {\bs u}))$, and $g_c(y_c,\bar u,\bs \beta)$ is an $N$-parameter unfolding of the pitchfork. Dynamics~\eqref{aftersstarx_center} capture the qualitative behavior of dynamics~\eqref{aftersstarx} for initial conditions sufficiently close to $(\bs x,\bar u)=(\bs 0,\bar u^*(\tilde{\bs u}))$ and small $\bs\beta$.

Behavior of equations \eqref{aftersstarx_center} can be analyzed using geometric singular perturbation theory \cite{Fenichel1979a,Krupa2001b,BerglundGentz2006}. We define the slow time $\tau=\epsilon s$, which transforms \eqref{aftersstarx_center} into the equivalent dynamics
\begin{align*}
\epsilon y' & = g_c(y,\bar{u},\bs\beta),\\
\bar{u}' & = \left(y_{th}^2 - y^2\right),
\label{EQ:Dyn3}
\numberthis
\end{align*}
where $'=\frac{d}{d\tau}$. In the singular limit $\epsilon \to 0$, the {\it boundary layer} dynamics evolving in fast time $s$ are
\begin{align*}
\dot y & = g_c(y,\bar{u},\bs\beta),\\
\dot{\bar{u}} & =0,
\label{EQ:Dyn2layer}
\numberthis
\end{align*}
and the {\it reduced} dynamics evolving in slow time $\tau$ are:
\begin{align*}
0 & = g_c(y,\bar{u},\bs\beta),\\
\bar{u}' & = \left(y_{th}^2 - y^2\right).
\label{EQ:Dyn3reduced}
\numberthis
\end{align*}
The slow dynamics are defined on the {\it critical manifold} $M_0=\{(y,\bar{u}):g_c(y,\bar{u},\bs\beta)=0\}$. 
Singular perturbation theory provides a qualitative picture of trajectories of the original dynamics; trajectories of the boundary layer dynamics are a good approximation of the original dynamics far from $M_0$, whereas trajectories of the reduced dynamics are a good approximation close to $M_0$.

Trajectories of the boundary layer and reduced dynamics are sketched in Figure~\ref{fig:Control_Simulations}(a), (c) and (f) for different qualitatively distinct cases. When $\bs\beta=0$ (Figure~\ref{fig:Control_Simulations}(a)), by Theorem~\ref{theorem:all-to-all-unif}, $M_0$ is composed of a single globally exponentially stable branch $y=0$ for $\bar{u} < \bar{u}^*$ and three branches emerging from a pitchfork bifurcation for $\bar{u}>\bar{u}^*$. The outer branches $y=\pm\bar y(\bar{u})$ are locally exponentially stable and  $y=0$ is unstable. For a given $\bar u$, trajectories of the boundary layer dynamics (double arrows) converge toward stable branches of $M_0$ and are repelled by unstable branches. On $M_0$, trajectories of the reduced dynamics (single arrow) satisfy $\bar{u}'>0$ if $|y|<y_{th}$ and $\bar{u}'<0$ if $|y|>y_{th}$.

\begin{figure*}
	\centering
    \includegraphics[width=120mm]{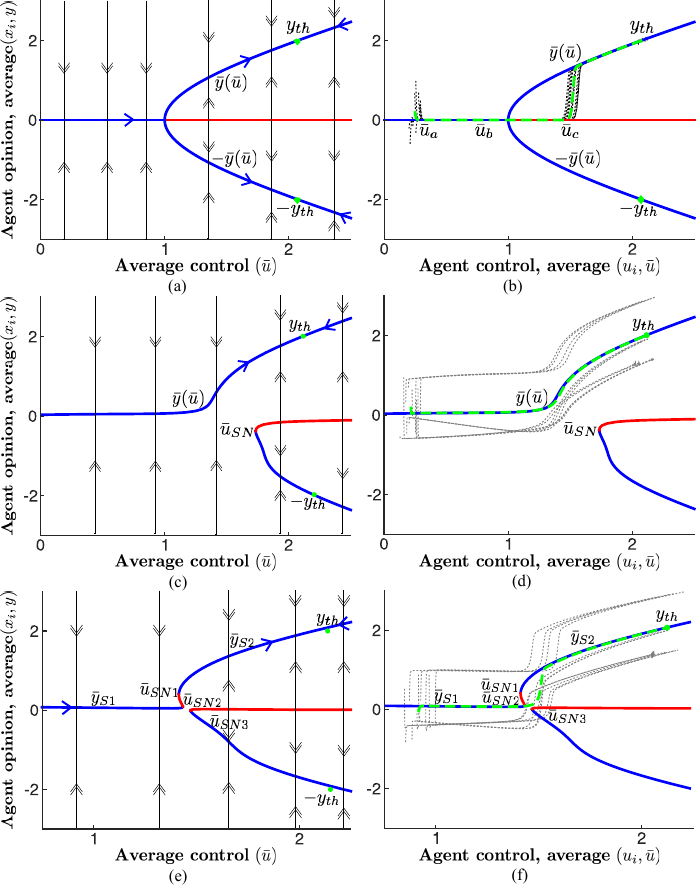}
    \caption{Illustration of the slow-fast controlled adaptive dynamics \eqref{aftersstarx}, for the graph shown in Figure~\ref{fig:univ-unfolding}. Shown in (a), (c), (e) are singular phase portraits of $y$ versus\ $\bar{u}$, where the double arrows illustrate the boundary layer dynamics and the single arrows illustrate the reduced dynamics. Shown in (b), (d), (f) are simulated trajectories of $x_i$ versus $u_i$ in dotted gray and $y$ versus $\bar u$ in dashed green.  (a) and (b) $\beta_A = \beta_B = 0$. (c) and (d) $\beta_A = \beta_B = -5$.  (e) and (f) $\beta_A = -5$ and $\beta_B = -4.5$.}
\label{fig:Control_Simulations}
\end{figure*}

When $\bs\beta\neq\bs 0$ there are four possible qualitatively distinct cases, corresponding to the four persistent bifurcation diagrams of the pitchfork illustrated in Figure~\ref{fig:univ-unfolding}(a). We consider the two bifurcation diagrams in the left column of Figure~\ref{fig:univ-unfolding}(a), as the other two are just their reflections. In the first case (Figure~\ref{fig:Control_Simulations}(c)), there exists a smooth attracting branch of the critical manifold that connects deadlock to decision. Trajectories slide along it until the threshold is reached.  In the second case (Figure~\ref{fig:Control_Simulations}(e)), the deadlock branch folds at $\bar u=\bar u_{SN2}$. At that point, we expect trajectories to jump to the stable decision branch and slide until the threshold is reached.

Theorem~\ref{THM:closed loop_1} summarizes the qualitative behavior of the adaptive control dynamics \eqref{aftersstarx_center} for sufficiently small $\epsilon$ and appropriate ranges of $\bar{u}(0)$. The results are illustrated with simulations in Figure~\ref{fig:Control_Simulations}(b), (d) and (f). The $\bar u_{SN}$'s denote the value of $\bar u$ at saddle-node bifurcation points in the perturbed bifurcation diagrams.

\begin{theorem}\label{THM:closed loop_1}
Suppose the interconnection graph is strongly connected. There exists $\bar\epsilon>0$ such that, for all $\epsilon\in(0,\bar\epsilon]$ and all $y_{th}>0$, the following hold for dynamics~\eqref{aftersstarx_center}.\\
$\bullet$ $\bs\beta=\bs 0$ (Figure~\ref{fig:Control_Simulations}(a)). For all initial conditions $y_c(0),\bar u (0)$ such that $|y_c(0)|$ is sufficiently small, $0<\bar u(0)<\bar u^*(\tilde{\bs u})$, and $|\bar u(0)-\bar u^*(\tilde{\bs u})|$ is sufficiently small, the trajectory of~\eqref{aftersstarx_center} exponentially approaches the critical manifold at a point $\bar{u}_a=\bar{u}(0)+\mathcal O(\epsilon|\log(\epsilon)|)$, stays close to it with $\dot{\bar u}>0$ until $u=\bar u_c=2\bar u^*(\tilde{\bs u})-\bar u_b$, where $\bar u(0)<\bar u_b<\bar u^*(\tilde{\bs u})$, and afterwards exponentially converges to the upper or lower attracting branches of the critical manifold, until $y \in \{\pm y_{th}\}$.\\
$\bullet$ $\bs\beta\neq\bs 0$, Case 1
(Figure~\ref{fig:Control_Simulations}(c)). For all initial conditions $y_c(0),\bar u (0)$ such that $|y_c(0)|$ is sufficiently small, $0<\bar u(0)$ and sufficiently small, the trajectory of~\eqref{aftersstarx_center} exponentially approaches the critical manifold at a point $\bar{u}(0)+\mathcal O(\epsilon|\log(\epsilon)|)$, and slides along the critical manifold with $\dot{\bar u}>0$ until $y=y_{th}$.\\
$\bullet$ $\bs\beta\neq\bs 0$, Case 2 
(Figure~\ref{fig:Control_Simulations}(e)). For all initial conditions $y_c(0),\bar u (0)$ such that $|y_c(0)|$ is sufficiently small, $0<\bar u(0)<\bar u_{SN1}$, and $|\bar u(0)-\bar u_{SN1}|$ is sufficiently small, the trajectory of~\eqref{aftersstarx_center} exponential approaches the critical manifold at a point $\bar{u}(0)+\mathcal O(\epsilon|\log(\epsilon)|)$, slides along the critical manifold with $\dot{\bar u}>0$ until it reaches the fold at $\bar u_{SN2}$, at which points it jumps to the upper attracting branch of the critical manifold and slides along it until $y=y_{th}$.
\end{theorem}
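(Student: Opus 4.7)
The plan is to apply geometric singular perturbation theory to the center-manifold reduced system~\eqref{aftersstarx_center}, treating the three cases according to the geometry of the critical manifold $M_0$. In all cases, the transient from the initial condition to a neighborhood of $M_0$ is handled by the boundary-layer dynamics~\eqref{EQ:Dyn2layer}. Because the attracting branches of $M_0$ are exponentially stable fixed points of the fast subsystem, a Tikhonov-type estimate yields convergence into an $\mathcal{O}(\epsilon)$-neighborhood of $M_0$ in fast time, during which the slow variable $\bar u$ drifts by $\mathcal{O}(\epsilon|\log\epsilon|)$; this accounts for the reported entry point $\bar u(0)+\mathcal{O}(\epsilon|\log\epsilon|)$ appearing in every case.

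For Case 1 there is a single globally attracting branch of $M_0$ running from the deadlock region past the threshold $|y|=y_{th}$. Fenichel's first theorem produces a locally invariant slow manifold $M_\epsilon$ that is $\mathcal{O}(\epsilon)$-close to $M_0$; on $M_\epsilon$ the slow drift satisfies $\bar u'=y_{th}^2-y^2>0$ as long as $|y|<y_{th}$, so the trajectory slides monotonically until $y=y_{th}$. For Case 2 the attracting lower branch of $M_0$ loses normal hyperbolicity at the fold $\bar u=\bar u_{SN2}$. Sliding along this branch is again handled by Fenichel away from the fold; to handle the jump we invoke the Krupa--Szmolyan blow-up analysis of planar fold singularities~\cite{Krupa2001b}, which yields passage through the fold region in fast time $\mathcal{O}(\epsilon^{2/3})$ and subsequent exponential attraction to the upper stable branch via the boundary-layer dynamics, from which the trajectory slides monotonically to $y=y_{th}$.

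The main obstacle is the symmetric case $\bs\beta=\bs 0$, where the trajectory must pass through the pitchfork singularity of the fast vector field at $\bar u=\bar u^*(\tilde{\bs u})$. Standard Fenichel theory fails there because the deadlock branch $y=0$ is nonhyperbolic at the pitchfork, so we appeal instead to the classical delayed-bifurcation (slow passage through pitchfork) estimates of Neishtadt and Haberman, as presented for example in~\cite{BerglundGentz2006}. Let $\lambda(\bar u)$ denote the leading eigenvalue of the fast Jacobian along the $y=0$ branch; by Theorem~\ref{theorem:unfolding generic hetero us}, $\lambda$ crosses zero transversally at $\bar u^*(\tilde{\bs u})$. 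For $\bar u<\bar u^*(\tilde{\bs u})$ the fast dynamics contract the trajectory exponentially toward $y=0$, bringing it into an exponentially small neighborhood at some value $\bar u_b\in(\bar u(0),\bar u^*(\tilde{\bs u}))$, while for $\bar u>\bar u^*(\tilde{\bs u})$ the destabilization exponentially amplifies this residual distance. The entry--exit relation identifies the jump point as the $\bar u_c$ satisfying $\int_{\bar u_b}^{\bar u_c}\lambda(\bar u)\,d\bar u=0$; using the Taylor expansion $\lambda(\bar u)=c(\bar u-\bar u^*(\tilde{\bs u}))+\mathcal{O}((\bar u-\bar u^*(\tilde{\bs u}))^2)$, this integral is symmetric about $\bar u^*(\tilde{\bs u})$ and gives exactly $\bar u_c=2\bar u^*(\tilde{\bs u})-\bar u_b$. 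Beyond $\bar u_c$ the trajectory is captured exponentially by one of the two symmetric stable branches $\pm\bar y(\bar u)$, with the direction determined by higher-order asymmetries or by the sign of the residual amplitude at $\bar u_c$, and then slides along it as in Case 1 until $y\in\{\pm y_{th}\}$.
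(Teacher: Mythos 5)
Your proposal is correct and follows essentially the same route as the paper's (much terser) proof sketch: standard Fenichel theory for the normally hyperbolic approach and sliding phases, the Krupa--Szmolyan fold analysis of~\cite{Krupa2001b} for Case~2, and the dynamic-pitchfork/entry--exit estimates of~\cite{BerglundGentz2006} for the symmetric case, whose integral relation you correctly unpack to obtain $\bar u_c = 2\bar u^*(\tilde{\bs u})-\bar u_b$. The only quibble is that the $\mathcal{O}(\epsilon^{2/3})$ scale in the fold passage refers to the state-space deviation rather than the fast-time duration, but this does not affect the argument.
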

\begin{proof}
We just sketch the proof of Theorem~\ref{THM:closed loop_1}.\\
$\bullet$ $\bs\beta=\bs 0$. Noticing that the critical manifold is normally hyperbolic (see \cite[Section 1.2]{Jones1995a}) and attracting for $\bar u<\bar u^*(\tilde{\bs u})$, the existence of the point $\bar u_a$ follows by the standard Fenichel theory~\cite{Fenichel1979a},\cite[Theorems~1,3]{Jones1995a}. The existence of points $\bar u_b$ and $\bar u_c$ follows by \cite[Theorem 2.2.4]{BerglundGentz2006}. The rest of the statement follows again by standard Fenichel theory.\\
$\bullet$ $\bs\beta\neq \bs 0$, Case 1. Because the smooth branch of the critical manifold connecting deadlock and decision is normally hyperbolic and attracting, the statement follows directly by the standard Fenichel theory.\\ 
$\bullet$ $\bs\beta\neq \bs 0$, Case 2. Because the branch of the critical manifold for $\bar u<\bar u_{SN1}$ is normally hyperbolic and attracting, the existence of the point $\bar u_a$ follows by the standard Fenichel theory. The behavior of trajectories through the fold $\bar u_{SN2}$, follows by \cite[Theorem~2.1]{Krupa2001b}. The rest of the statement follows by the standard Fenichel theory.
\end{proof}

\begin{remark}[Bifurcation delay] \textup{The passage through the pitchfork for $\bs\beta=\bs 0$ is characterized by a ``bifurcation delay", that is, the system state lies close to the unstable branch of equilibria $y=0$ for an $\mathcal O(1)$ range of $\bar u$ after the pitchfork bifurcation (at $\bar u=\bar u_c$). This delay is illustrated in Figure~\ref{fig:Control_Simulations}(b). Intuitively, the delay is due to the fact that near the bifurcation point, dynamics~\eqref{aftersstarx_centera} slow down significantly and singular perturbation arguments based on the timescale separation do not hold.}
\label{rem:delay}
\end{remark}

\begin{remark}[Guaranteed deadlock breaking] For any $y_{th}>0$, the proposed feedback control ensures that deadlock is broken for sufficiently small $\bs\beta$.
\end{remark}

Note that since center manifold theory is a local theory, Theorem~\ref{THM:closed loop_1} captures the behavior of the full dynamics~\eqref{aftersstarx} only close to the singular point $(\bs x,\bar u,\bs\beta)=(\bs 0,\bar u^*(\tilde{\bs u}),\bs 0)$. However, since in the all-to-all case with zero information and zero social effort differences, the consensus manifold is {\it globally} exponentially attracting (and invariant), we have the following straightforward corollary of Theorem~\ref{THM:closed loop_1}.

\begin{corollary}[All-to-all system]
If the graph is all-to-all and the social effort differences are zero ($\tilde{\bs u}=\bs 0$), then the same result as Theorem~\ref{THM:closed loop_1} for $\bs\beta=\bs 0$ holds globally in $\bs x$ and $\bar u$.
\end{corollary}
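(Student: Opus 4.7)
The plan is to replace the local center-manifold reduction used in the proof of Theorem~\ref{THM:closed loop_1} by a \emph{global} reduction to the consensus manifold, which is available precisely because of the all-to-all, symmetric ($\tilde{\bs u}=\bs 0$), uninformed ($\bs\beta=\bs 0$) structure. Indeed, under these assumptions the fast subsystem obtained by freezing $\bar u$ in~\eqref{aftersstarx} is exactly the dynamics of Corollary~\ref{theorem:all-to-all-unif}, so the consensus manifold $\{\bs x = y\bs 1_N\}$ is globally exponentially attracting for every fixed $\bar u\ge 0$, with Lyapunov decay rate $2(N-1)$ independent of $\bar u$ (from the proof of Corollary~\ref{theorem:all-to-all-unif}(i)). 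Extending by the $\bar u$ coordinate, I would first argue that the 2D set $\mathcal M_c=\{(\bs x,\bar u)\,:\,\bs x = y\bs 1_N,\ y\in\mathbb R,\ \bar u\in\mathbb R\}$ is invariant under the full adaptive dynamics~\eqref{aftersstarx}: consensus is preserved because $D\bs x=(N-1)y\bs 1_N$ and $UA\bs S(\bs x)=(N-1)\bar u S(y)\bs 1_N$ both point along $\bs 1_N$ when $\tilde{\bs u}=\bs 0$ and $\bs\beta=\bs 0$, and $\dot{\bar u}$ depends on $\bs x$ only through $y$. Global exponential attractivity of $\mathcal M_c$ then follows from the same $\bar u$-uniform Lyapunov estimate.

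Second, I would restrict~\eqref{aftersstarx} to $\mathcal M_c$ and obtain the planar slow-fast system
\begin{align*}
\dot y &= (N-1)\bigl(-y + \bar u S(y)\bigr),\\
\dot{\bar u} &= \epsilon\bigl(y_{th}^2 - y^2\bigr),
\end{align*}
which is valid for all $(y,\bar u)\in\mathbb R^2$ rather than only near $(0,1)$. The critical manifold $M_0=\{-y+\bar u S(y)=0\}$ is globally characterized by Corollary~\ref{theorem:all-to-all-unif}: $y=0$ is the unique branch for $\bar u\le 1$ and is globally exponentially stable there, while for $\bar u>1$ the set $M_0$ consists of the unstable branch $y=0$ together with two smooth, symmetric, globally attracting branches $y=\pm\bar y(\bar u)$, where $\bar y(\bar u)$ is the positive solution of $y=\bar u S(y)$. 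Each of these branches is normally hyperbolic away from the pitchfork point $(0,1)$. I would then invoke exactly the same singular perturbation machinery used in the proof of Theorem~\ref{THM:closed loop_1} (Fenichel theory \cite{Fenichel1979a,Jones1995a} for the sliding phases, and \cite[Theorem~2.2.4]{BerglundGentz2006} for the delayed passage through the pitchfork) on this globally defined planar system. Because the global attractivity of $\mathcal M_c$ is exponential and $\bar u$-uniform, Fenichel's slow manifold theorem provides an $\mathcal O(\epsilon)$-close invariant slow manifold over the globally attracting portion of $M_0$, and the approach time, bifurcation delay $\bar u_c=2-\bar u_b$, and eventual convergence to $\{y=\pm y_{th}\}$ all transfer verbatim.

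The main obstacle I expect is ensuring that the \emph{global} picture after the delayed pitchfork is not corrupted by dynamics far from the local normal form. Concretely, one must check (a)~that trajectories escaping from the unstable branch $y=0$ for $\bar u>1$ actually converge to one of the outer branches $y=\pm\bar y(\bar u)$ rather than drifting to infinity, and (b)~that this escape happens on the fast timescale, before $\bar u$ grows enough to move the targeted outer branch across $y=\pm y_{th}$. Point~(a) is a direct consequence of the global portrait of the scalar equation $\dot y=(N-1)(-y+\bar u S(y))$ in Corollary~\ref{theorem:all-to-all-unif}(iii): for any $\bar u>1$ the only attractors are $\pm\bar y(\bar u)$, and $\mathbb R\setminus\{0\}$ is their basin. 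Point~(b) is handled by choosing $\bar\epsilon$ small enough that $\bar u$ moves by at most a prescribed $\mathcal O(\epsilon|\log\epsilon|)$ amount during the fast escape, which is exactly the estimate already used in \cite[Theorem~2.2.4]{BerglundGentz2006}. With these global confinement properties in place, every step of the proof of the $\bs\beta=\bs 0$ part of Theorem~\ref{THM:closed loop_1} applies without a smallness hypothesis on the initial $(y(0),\bar u(0))$ (beyond $0<\bar u(0)<1$), yielding the claim.
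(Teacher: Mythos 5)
Your proposal is correct and follows exactly the route the paper intends: the paper justifies this corollary in one sentence by noting that for the all-to-all, $\tilde{\bs u}=\bs 0$, $\bs\beta=\bs 0$ case the consensus manifold is globally exponentially attracting and invariant, so the local center-manifold reduction of Theorem~\ref{THM:closed loop_1} can be replaced by a global reduction to the scalar consensus dynamics of Corollary~\ref{theorem:all-to-all-unif}, after which the same Fenichel/delayed-bifurcation arguments apply. Your write-up simply makes explicit the invariance computation, the global structure of the critical manifold, and the confinement estimates that the paper leaves as ``straightforward.''
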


We further stress that numerical simulations suggest that the results of Theorem~\ref{THM:closed loop_1} hold globally in the generic case of a strongly connected graph with non-zero information and non-zero social effort differences.

\section{Conclusion}
\label{sec:final}
We have defined an agent-based model that describes distributed dynamics for collective decision-making between alternatives in a multi-agent network with a generic strongly connected graph.  The agent-based dynamics are  carefully designed to exhibit a pitchfork bifurcation so that  mechanisms of collective animal behavior can be formally translated into bio-inspired control design for multi-agent decision-making. We have rigorously established the nonlinear phenomena associated with the proposed dynamics.
We have used this framework to prove agent-based dynamics that inherit the value sensitivity and robust and adaptive features of honeybee nest site selection.   The sensitivity of outcomes to model parameters and heterogeneity have been investigated. An adaptive control law has been designed for the bifurcation parameter to achieve unanimous decision-making in the network.  The framework will be used for control inspired by high performing decision-making of schooling fish and other animal groups.  Future work will also seek to characterize nonlinear behavior away from the bifurcation point $u\gg 1$, for larger values of $\bs \beta$, and when more than two options are available.

\begin{IEEEbiography}[{\includegraphics[width=1in,height=1.25in,clip,keepaspectratio]{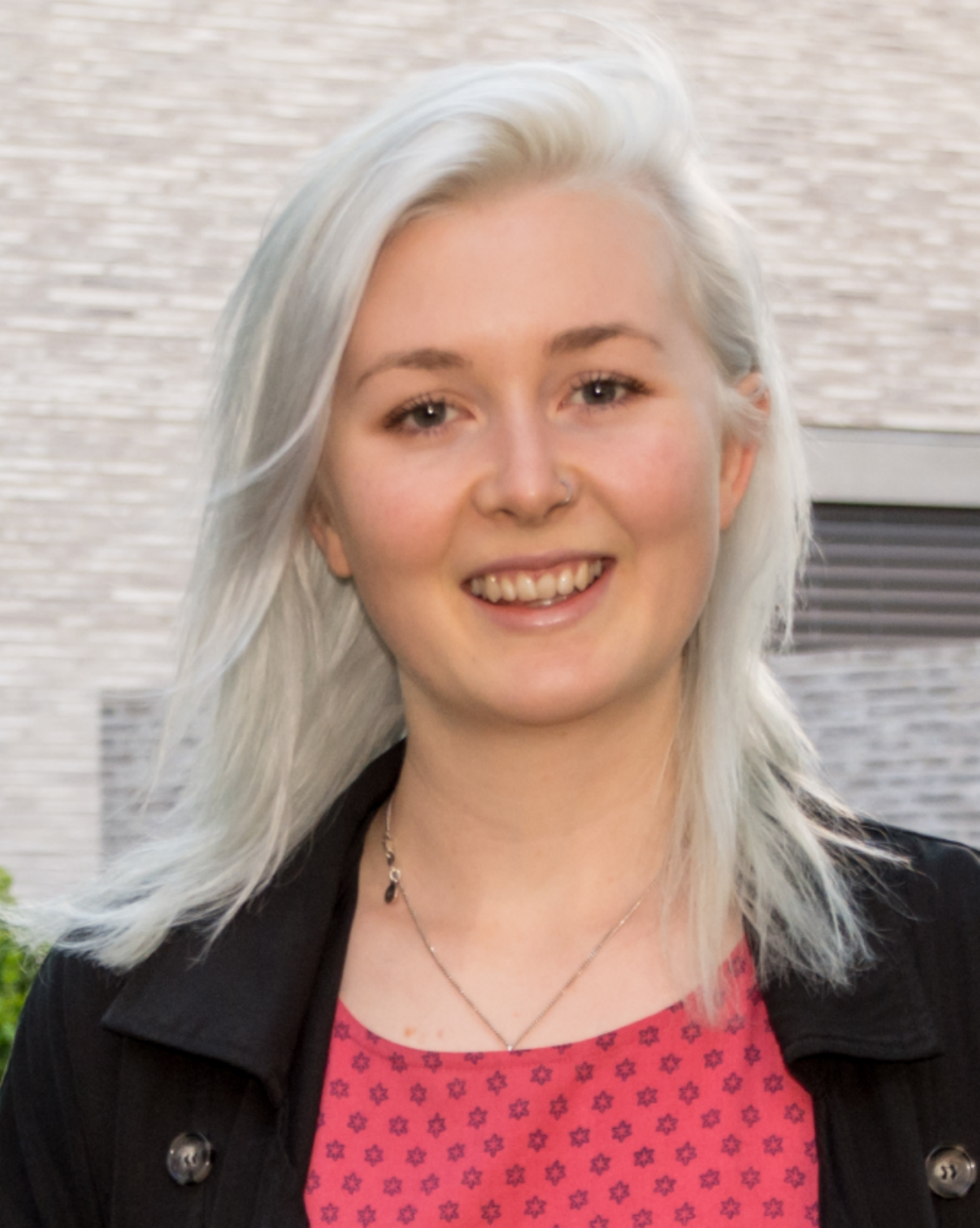}}]{Rebecca Gray} received the B.E.(Hons) degree (2013) in mechanical engineering from the University of Canterbury, Christchurch, Auckland and the M.A. degree in mechanical engineering from Princeton University, Princeton, NJ. She is a graduate student in the Mechanical and Aerospace Engineering Department at Princeton University, and her research is in modelling and control of bio-inspired collective decision-making.
 \end{IEEEbiography}
 
 \begin{IEEEbiography}[{\includegraphics[width=1in,height=1.25in,clip,keepaspectratio]{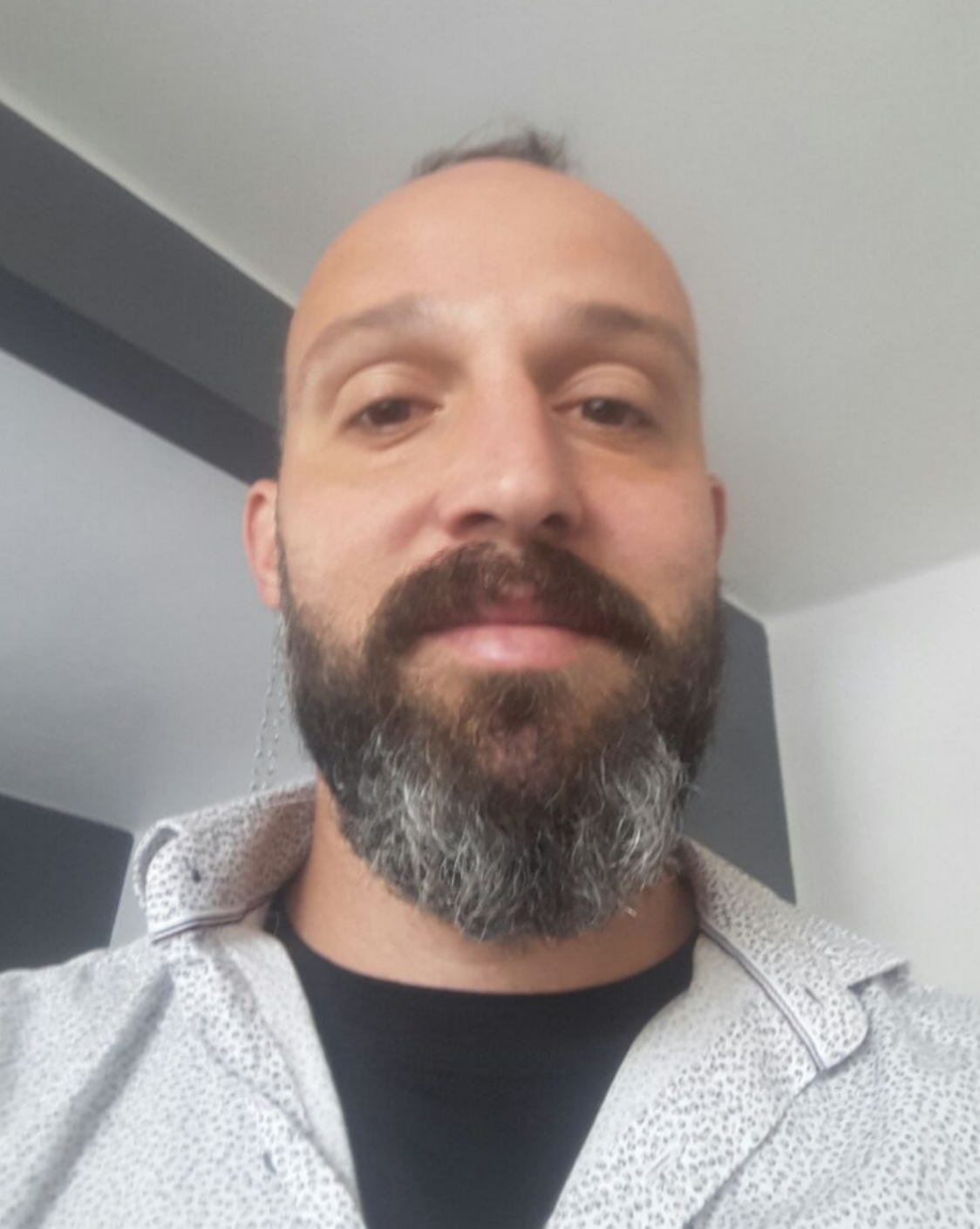}}]{Alessio Franci} got his M.S. (Laurea Specialistica) degree in Theoretical Physics from the University of Pisa in 2008 and his Ph.D. in Physics and Control Theory from the University of Paris Sud XI in 2012. Between 2012 and 2015 he was post-doc in different places (University of Liege, INRIA Lille-Nord Europe, University of Cambridge) but always working at the edge between Control Theory and Neuroscience. Recently, he started to extend his interests to collective decision making in animal groups, neuromorphic circuit design, and spatiotemporal pattern formation in neuroscience and evolutionary biology. He enjoys fruitful interdisciplinary collaborations with researchers in various countries. Since April 2015, he is Associate Professor of Biomathematics at the Universidad Nacional Aut\'onoma de M\'exico, Ciudad de M\'exico, Mexico.
 \end{IEEEbiography}

\begin{IEEEbiography}[{\includegraphics[width=1in,height=1.25in,clip,keepaspectratio]{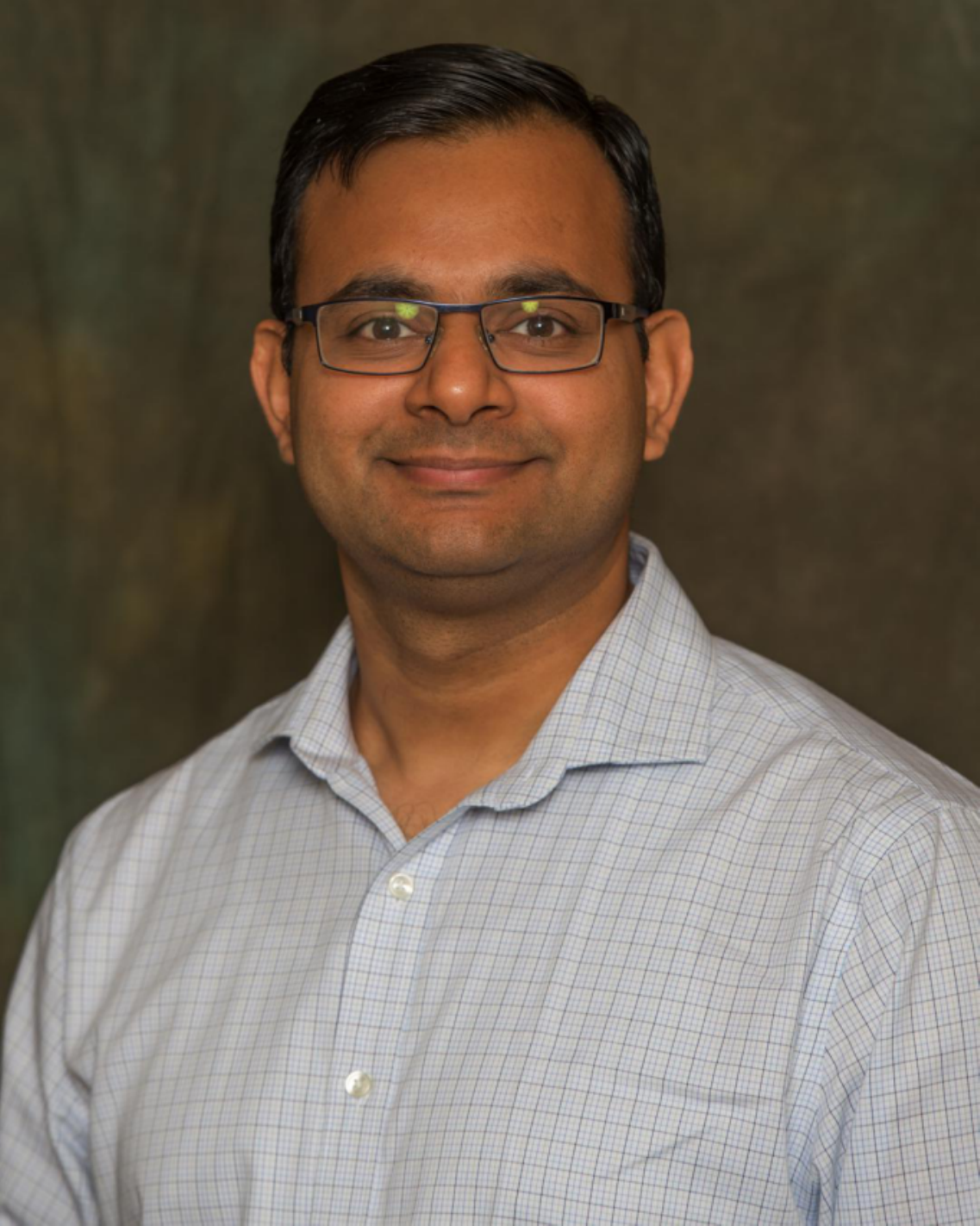}}]{Vaibhav Srivastava} received the B.Tech. degree (2007) in mechanical engineering from the Indian Institute of Technology Bombay, Mumbai, India; the M.S. degree in mechanical engineering (2011), the M.A. degree in statistics (2012), and and the Ph.D. degree in mechanical engineering (2012) from the University of California at Santa Barbara, Santa Barbara, CA. He served as a Lecturer and Associate Research Scholar with the Mechanical and Aerospace Engineering Department, Princeton University, Princeton, NJ from 2013-2016.

Srivastava is an Assistant Professor of Electrical and Computer Engineering at Michigan State University. His research interests include modeling and analysis of human cognition; shared human-automata systems; collective decision-making; and multi-agent robotics.  
 \end{IEEEbiography}

\begin{IEEEbiography}[{\includegraphics[width=1in,height=1.25in,clip,keepaspectratio]{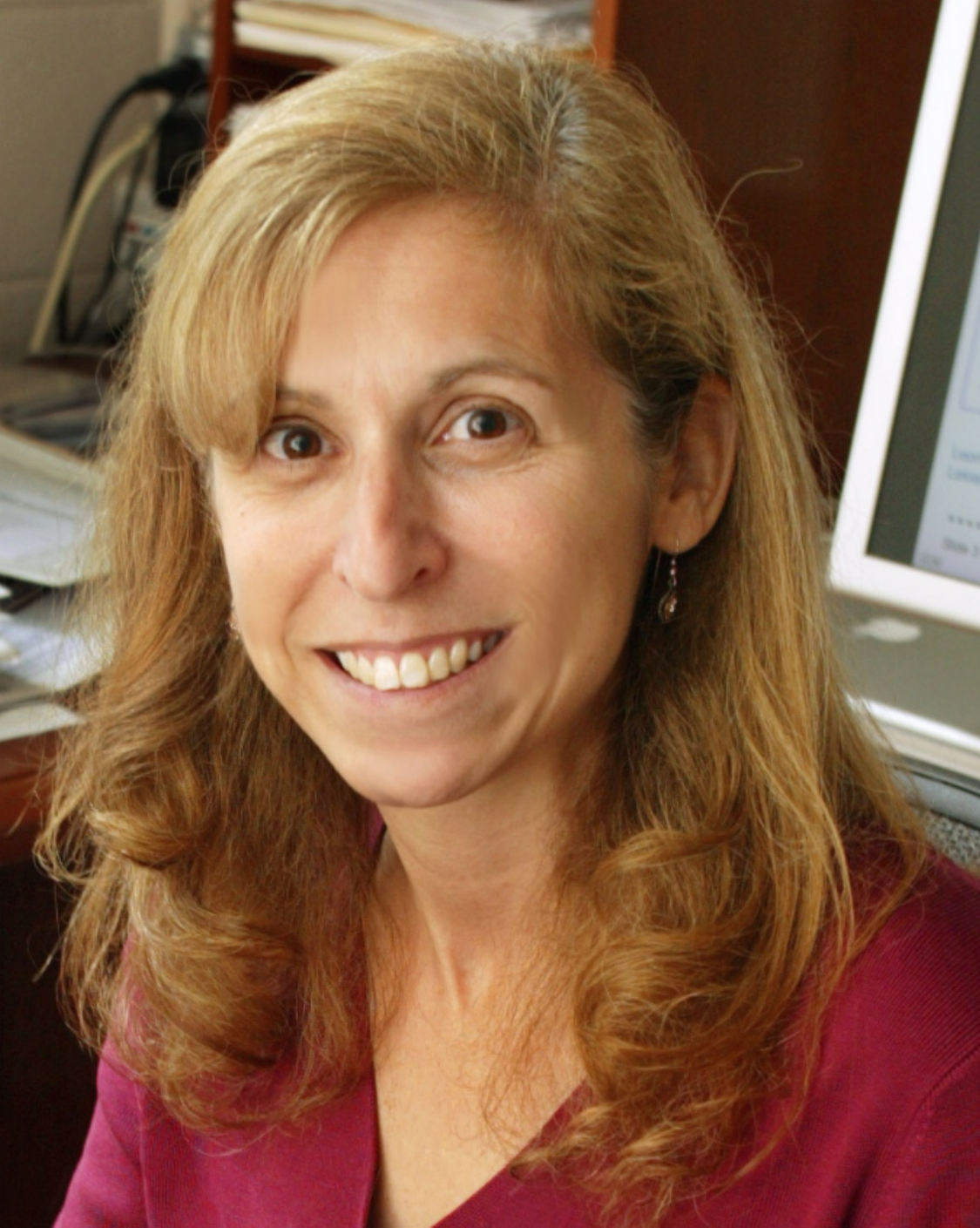}}]{Naomi Ehrich Leonard} (F '07)
received the B.S.E. degree in mechanical engineering from Princeton University, Princeton, NJ, in 1985 and the M.S. and Ph.D. degrees in electrical engineering from the University of Maryland, College Park, in 1991 and 1994, respectively.  From 1985 to 1989, she worked as an Engineer in the electric power industry.  

Leonard is the Edwin S. Wilsey Professor of Mechanical and Aerospace Engineering and Director of the Council on Science and Technology at Princeton University.  She is also  associated faculty of Princeton's Program in Applied and Computational Mathematics.  Leonard's research and teaching are in control and dynamical systems with current interests in coordinated control for multi-agent systems, mobile sensor networks, collective animal behavior, and human decision-making dynamics.  
 \end{IEEEbiography}
\end{document}